\newtheorem{theorem}{Theorem}[section]
\newtheorem{lemma}[theorem]{Lemma}
\newtheorem{corollary}[theorem]{Corollary}
\newtheorem{proposition}[theorem]{Proposition}
\theoremstyle{definition}
\theoremstyle{remark}
\newcommand{\R}{\mathbb{R}}
\newcommand{\B}{\mathcal{B}}
\newcommand{\C}{\mathcal{C}}
\newcommand{\F}{\mathcal{F}}
\renewcommand{\P}{\mathcal{P}}
\newcommand{\U}{\mathcal{U}}
\newcommand{\explicitSet}[1]{\left\lbrace #1 \right\rbrace}
\newcommand{\brackets}[1]{\left\langle #1 \right\rangle}
\newcommand{\set}[2]{\explicitSet{#1 \colon #2}}
\newcommand{\seq}[2]{\brackets{#1 \colon #2}}
\renewcommand{\a}{\alpha}
\renewcommand{\b}{\beta}
\newcommand{\g}{\gamma}
\newcommand{\e}{\varepsilon}
\renewcommand{\k}{\kappa}
\newcommand{\s}{\sigma}
\newcommand{\w}{\omega}
\newcommand{\0}{\emptyset}
\newcommand{\sub}{\subseteq}
\newcommand{\rest}{\!\restriction\!}
\newcommand{\cat}{\!\,^{\frown}}
\newcommand{\homeo}{\approx}
\newcommand{\closure}[1]{\overline{#1}}
\newcommand{\wt}{w}
\newcommand{\cf}{\mathrm{cf}}
\newcommand{\card}[1]{\left\lvert #1 \right\rvert}
\newcommand{\tr}[1]{[\![#1]\!]}
\newcommand{\continuum}{\mathfrak{c}}
\newcommand{\dom}{\mathfrak d}
\newcommand{\cov}[1]{\ensuremath{\bld{cov}(\scr{#1})}}
\newcommand{\ch}{\ensuremath{\mathsf{CH}}\xspace}
\newcommand{\zfc}{\ensuremath{\mathsf{ZFC}}\xspace}
\newcommand{\ma}{\ensuremath{\mathsf{MA}}\xspace}
\newcommand{\ser}{\acute{\mathfrak{n}}}
\renewcommand{\mp}{\mathfrak{par}\hspace{-.1mm}}
\renewcommand{\cov}{\mathfrak{cov}\hspace{-.15mm}}
\begin{document}

%%%%%%%%%%%%
\title[Covering versus partitioning with the Cantor space]{Covering versus partitioning \linebreak with the Cantor space}
%%%%%%%%%%%%
\author{Will Brian}
\address {
W. R. Brian\\
Department of Mathematics and Statistics\\
University of North Carolina at Charlotte\\
9201 University City Blvd.\\
Charlotte, NC 28223, USA}
\email{wbrian.math@gmail.com}
\urladdr{wrbrian.wordpress.com}
%%%%%%%%%%%%

%%%%%%%%%%%%
\subjclass[2010]{Primary:  54E35, 03E05 Secondary: 03E55}
\keywords{partitions, Cantor space, Baire space \vspace{1mm}}

%%%%%%%%%%%%

%%%%%%%%%%%%
\begin{abstract}
What topological spaces can be partitioned into copies of the Cantor space $2^\omega$? An obvious necessary condition is that a space can be partitioned into copies of $2^\omega$ only if it can be covered with copies of $2^\omega$. We prove three theorems concerning when this necessary condition is also sufficient.

If $X$ is a metrizable space and $|X| \leq \mathfrak{c}^{+\omega}$ (the least limit cardinal above $\mathfrak{c}$), then $X$ can be partitioned into copies of $2^\omega$ if and only if $X$ can be covered with copies of $2^\omega$.
To show this cardinality bound is sharp, we construct a metrizable space of size $\mathfrak{c}^{+(\omega+1)}$ that can be covered with copies of $2^\omega$, but not partitioned into copies of $2^\omega$. 

Similarly, if $X$ is first countable and $|X| \leq \mathfrak{c}$, then $X$ can be partitioned into copies of $2^\omega$ if and only if $X$ can be covered with copies of $2^\omega$.
On the other hand, there is a first countable space of size $\mathfrak{c}^+$ that can be covered with copies of $2^\omega$, but not partitioned into copies of $2^\omega$. 

Finally, we show that, regardless of its cardinality, a completely metrizable space can be partitioned into copies of $2^\omega$ if and only if it can be covered with copies of $2^\omega$ if and only if it has no isolated points.
\end{abstract}
%%%%%%%%%%%%

\maketitle

%%%%%%%%%%%%

\section{Introduction}

Given two topological spaces $X$ and $Y$, let us say $X$ is \emph{$Y$-partitionable} if there is a partition $\P$ of $X$ with each member of $\P$ homeomorphic to $Y$. 
Similarly, let us say $X$ is \emph{$Y$-coverable} if there is a covering $\C$ of $X$ with each member of $\C$ homeomorphic to $Y$. 

Every $Y$-partitionable space is $Y$-coverable, for any $Y$, because every partition is also a covering. 
In general, however, a $Y$-coverable space need not be $Y$-partitionable.
For example, if $Y$ is the circle $S^1$ and $X$ is the wedge sum of two circles (an 8), then $X$ is $Y$-coverable but not $Y$-partitionable.

In this paper, we focus mainly on $Y = 2^\w$, the Cantor space.
As above, any $2^\w$-partitionable space is necessarily $2^\w$-coverable. We prove three theorems concerning when this necessary condition is also sufficient:
%and investigate the general question of when $2^\w$-coverability and $2^\w$-partitionability are equivalent.
%The main results are:

\begin{itemize}
\item[$\circ$] A first countable space of size $\leq\! \continuum$ is $2^\w$-partitionable if and only if it is $2^\w$-coverable.
%On the other hand, there is a first countable space of size $\continuum^+$ that is $2^\w$-coverable but not $2^\w$-partitionable.
\item[$\circ$] A metrizable space of size $\leq\! \continuum^{+\w}$ (the first limit cardinal above $\continuum$) is $2^\w$-partitionable if and only if it is $2^\w$-coverable.
\item[$\circ$] A Borel subspace of a completely metrizable space is $2^\w$-partitionable if and only if it is $2^\w$-coverable. In particular, a completely metrizable space is $2^\w$-partitionable if and only if it is $2^\w$-coverable if and only if it has no isolated points.
\end{itemize}
These results are proved in Sections 2, 3, and 4, respectively. For the first two results, we give examples showing that the cardinality bounds are necessary and sharp. In other words, we show:
\begin{itemize}
\item[$\circ$] There is a first countable space of size $\mathfrak{c}^+$ that is $2^\w$-coverable but not $2^\w$-partitionable.
\item[$\circ$] There is a metrizable space of size $\continuum^{+\omega+1}$ that is $2^\w$-coverable, but not $2^\w$-partitionable.
\end{itemize}

The main tool for the results of Sections 3 and 4 is a technical lemma proved in Section 3 that recharacterizes the $2^\w$-partitionability of a metrizable space $X$ in terms of the existence of sufficiently dense packings of Cantor spaces into open subsets of $X$.

Finally, in Section 5 we observe that the three main results stated above remain true when $2^\w$ is replaced by the Baire space $\w^\w$, or by any other zero-dimensional Polish space $Y$ with no isolated points. 

It is worth noting that Bankston and McGovern studied the notions of $2^\w$- and $\w^\w$-partitionability in \cite{Bankston&McGovern}. They showed (among other things): 
\begin{enumerate}
\item \ch implies that if $X$ is a completely metrizable space with $|X| \leq \continuum$, then $X$ is $2^\w$-partitionable if and only if $X$ has no isolated points. 
\item \ma implies that if $X$ is a separable completely metrizable space, then $X$ is $2^\w$-partitionable if and only if $X$ has no isolated points.
\end{enumerate}
(See \cite[Theorem 2.10]{Bankston&McGovern}.)
The results of this paper can be seen as improving on these two theorems, both by removing the superfluous set-theoretic hypotheses, and by weakening the topological hypotheses on $X$.

%Wang proved that a metrizable space is $\Q$-partitionable if and only if it is crowded \cite{Wang1}, and extended his results later to certain non-metrizable spaces \cite{Wang2}.
%Treating $Y$ as a more rigid geometric object, rather than merely as a topological object, one can find a fairly extensive literature on $Y$-partitionability: see \cite{Cobb} or \cite{Komjath}, and the references therein, for more in this direction.

Throughout the paper, all spaces are assumed to be Hausdorff.
We also assume the reader is comfortable with Brouwer's characterization of the Cantor space: up to homeomorphism, $2^\w$ is the unique nonempty zero-dimensional compact metrizable space with no isolated points.

%%%%%%%%%%%%
\section{Partitioning small first countable spaces}

Our first lemma gives two simple recharacterizations of $2^\w$-coverability for first countable spaces. 

\begin{lemma}\label{lem:setup}
For any first countable space $X$, the following are equivalent:
\begin{enumerate}
\item $X$ is $2^\w$-coverable.
\item Every $ x \in X$ is contained in a subspace of $X$ homeomorphic to $2^\w$.
\item Every nonempty open subset of $X$ contains a copy of $2^\w$.
\end{enumerate}
%Moreover, all of these statements hold if $X$ is $2^\w$-partitionable.
\end{lemma}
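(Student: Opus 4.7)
The plan is to show $(1) \Rightarrow (2) \Rightarrow (3) \Rightarrow (1)$. The first implication is immediate, since a covering of $X$ by Cantor copies places each $x$ in one of them. For $(2) \Rightarrow (3)$, I would take a nonempty open $U \sub X$, pick $x \in U$, and let $C \homeo 2^\w$ be a Cantor copy with $x \in C$. Then $C \cap U$ is open in $C$ and contains $x$; since $C$ is a zero-dimensional compact metric space with no isolated points, $x$ has a clopen (in $C$) neighborhood $C' \sub C \cap U$, and by Brouwer's characterization $C' \homeo 2^\w$.

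The real content is $(3) \Rightarrow (1)$, where first countability becomes essential. Given $x \in X$, first note that $x$ cannot be isolated, for otherwise $\{x\}$ would contain a copy of $2^\w$ by (3). Fix a decreasing countable neighborhood base $(V_n)_{n \in \w}$ at $x$. I will construct, by induction on $n$, open sets $U_n \sub V_n$ that are pairwise disjoint and satisfy $x \notin \overline{U_n}$. Having built $U_1, \ldots, U_{n-1}$, the set $A_n = V_n \cap \bigcap_{i<n} (X \setminus \overline{U_i})$ is an open neighborhood of $x$, so by non-isolation it contains a point $y_n \neq x$; by Hausdorffness, $y_n$ has an open neighborhood $U_n \sub A_n$ whose closure misses $x$. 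Applying (3) inside each $U_n$ produces a Cantor copy $C_n \sub U_n$, and I set $K = \{x\} \cup \bigcup_n C_n$ with the subspace topology.

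It remains to verify $K \homeo 2^\w$. Disjointness of the $U_n$'s together with $x \notin U_n$ gives $U_n \cap K = C_n$, so each $C_n$ is open in $K$, and closed as a compact subset of a Hausdorff space. Since $U_n \sub V_n$, every neighborhood of $x$ in $X$ contains all but finitely many $C_n$; this yields compactness of $K$ (one open set covers $x$ and all but finitely many $C_n$, and each remaining $C_n$ is compact), produces clopen neighborhoods $\{x\} \cup \bigcup_{n \geq N} C_n$ forming a base at $x$, and shows $x$ is not isolated in $K$. Combined with the clopen bases of the individual $C_n$'s, these make $K$ a nonempty compact Hausdorff second countable zero-dimensional space with no isolated points, hence a Cantor space by Brouwer. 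The main obstacle is arranging the inductive step: Hausdorffness only separates points from points, so the trick is to use first countability to supply a single witness $y_n$ in each shrinking neighborhood of $x$, then apply Hausdorff to extract an open set around $y_n$ whose closure avoids $x$, letting $(V_n)$ force the resulting $U_n$ to converge to $x$.
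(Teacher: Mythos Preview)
Your proof is correct and follows essentially the same route as the paper's: both prove $(2)\Rightarrow(3)$ by intersecting a Cantor copy through $x$ with $U$ and extracting a clopen piece, and both prove $(3)\Rightarrow(2)$ by placing a Cantor set inside each member of a shrinking neighborhood base at $x$ and taking the union together with $\{x\}$. The paper simply writes ``it is not difficult to show that $\{x\}\cup\bigcup_n K_n\homeo 2^\w$'' without arranging disjointness, whereas you carry out an explicit disjointification so that each $C_n$ is clopen in $K$, making the Brouwer verification transparent; this extra care is helpful but not a genuinely different idea.
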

\begin{proof}
It is clear that $(1)$ implies $(2)$. Conversely $(2)$ implies $(1)$, because if every $x \in X$ is contained in some $K_x \homeo 2^\w$, then $\set{K_x}{x \in X}$ is a covering of $X$ with copies of $2^\w$. Hence $(1)$ and $(2)$ are equivalent.

To show $(2)$ implies $(3)$, suppose $(2)$ holds. Let $U$ be a nonempty open subset of $X$, and fix $x \in U$. By $(2)$, there is some $K \sub X$ with $x \in K \homeo 2^\w$. Because $U \cap K$ is a nonempty open subset of $K$, and because $K$ has a basis of clopen sets homeomorphic to $2^\w$, there is some $C \sub U \cap K$ with $C \homeo 2^\w$. 

It remains to show $(3)$ implies $(2)$, so suppose $(3)$ holds. Let $x \in X$, and fix a countable decreasing neighborhood basis $\set{U_n}{n \in \w}$ for $x$. For each $n \in \w$ let $K_n$ be a copy of $2^\w$ contained in $U_n$ (which exists by $(2)$).
Using Brouwer's topological characterization of the Cantor space, it is not difficult to show that $\{x\} \cup \bigcup_{n \in \w}K_n \homeo 2^\w$. As $x$ was arbitrary, this proves $(2)$.
\end{proof}

\begin{lemma}\label{lem:break-up}
There is a partition $\P$ of $2^\w$ into copies of $2^\w$ such that every nonempty open $U \sub 2^\w$ contains $\continuum$ members of $\P$. 
\end{lemma}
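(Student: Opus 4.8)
The plan is to build $\P$ in two layers: a family of ``local'' copies of $2^\w$ that will carry the density requirement, together with a single ``global'' remainder that is partitioned separately. Since every nonempty open set contains a basic clopen set $[s]$ (with $s \in 2^{<\w}$), it suffices to arrange that each such $[s]$ already contains $\continuum$ members of $\P$. The key idea for getting continuum-many members inside $[s]$ --- rather than inserting them one at a time, which runs into the usual difficulty of reserving enough room in a transfinite recursion of length $\continuum$ --- is to insert a single \emph{nowhere dense} copy $Z_s \homeo 2^\w$ into $[s]$ and then split $Z_s$ internally into $\continuum$ pieces.

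Concretely, I would first enumerate $2^{<\w} = \{s_k : k \in \w\}$ and recursively choose pairwise disjoint nowhere dense copies $Z_{s_k} \sub [s_k]$ of $2^\w$: at stage $k$ the previously chosen sets form a closed nowhere dense set $W$, so $[s_k] \setminus W$ is a nonempty open subset of $[s_k]$ and hence contains a nowhere dense copy of $2^\w$, which I take to be $Z_{s_k}$. Next, using a homeomorphism $Z_s \homeo 2^\w \times 2^\w$, I split each $Z_s$ into the $\continuum$ pairwise disjoint fibers $2^\w \times \{y\}$, each a copy of $2^\w$ and each contained in $[s]$. These fibers are the local members, and they already guarantee that every $[s]$ contains at least $\continuum$ members of $\P$.

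It remains to cover the leftover set $R = 2^\w \setminus \bigcup_{s} Z_s$, and I expect this to be the main obstacle. Because each $Z_s$ is nowhere dense, $\bigcup_s Z_s$ is meager, so by the Baire category theorem $R$ is a nonempty dense $G_\delta$; and because $\bigcup_s Z_s$ meets every $[s]$, the set $R$ has empty interior. To partition $R$ into copies of $2^\w$ I would identify $R$ up to homeomorphism: it is a nonempty, separable, completely metrizable, zero-dimensional space in which every compact subset has empty interior (a compact $K \sub R$ with nonempty relative interior would contain a set $R \cap U$ that is dense in the open set $U$ and closed in $2^\w$, forcing $U \sub R$ and contradicting that $R$ has empty interior). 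By the Alexandrov--Urysohn characterization of the Baire space, $R \homeo \w^\w$; and since $\w^\w \homeo \w^\w \times 2^\w$, the space $R$ is partitioned by the $\continuum$ slices corresponding to $\{a\} \times 2^\w$, each a copy of $2^\w$.

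Taking $\P$ to consist of all the fibers of all the $Z_s$ together with these slices of $R$ then yields a partition of $2^\w$ into copies of $2^\w$: the members are pairwise disjoint and cover $2^\w$ by construction. The residual slices coming from $R$ need not lie inside any proper clopen set, but this is harmless, since the density requirement is already secured by the fibers of the $Z_s$, which show that every $[s]$, and hence every nonempty open subset of $2^\w$, contains $\continuum$ members of $\P$.
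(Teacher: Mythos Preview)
Your proposal is correct and follows essentially the same approach as the paper: place countably many pairwise disjoint nowhere dense copies of $2^\w$ so that every basic clopen set contains one, split each of these into $\continuum$ copies via $2^\w \homeo 2^\w \times 2^\w$, and then partition the $G_\delta$ remainder (which is homeomorphic to $\w^\w$) into copies of $2^\w$ via $\w^\w \homeo \w^\w \times 2^\w$. The only cosmetic difference is that you justify $R \homeo \w^\w$ directly via the Alexandrov--Urysohn characterization, whereas the paper cites the fact that deleting a dense $F_\sigma$ from $2^\w$ yields $\w^\w$.
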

\begin{proof}
The proof makes use of three basic facts about the Cantor space and the Baire space $\w^\w$:
\begin{enumerate}
\item The Cantor space can be partitioned into $\continuum$ copies of itself (for instance, because $2^\w \homeo 2^\w \times 2^\w$ by Brouwer's theorem).
\item The Baire space $\w^\w$ can be partitioned into copies of the Cantor space (because $\w^\w \homeo \w^\w \times 2^\w$; see \cite[Theorem 7.7]{Kechris}.)
\item If a dense $F_\s$ set is removed from $2^\w$, then what remains is homeomorphic to $\w^\w$ (this follows from \cite[Theorems 3.11 and 7.7]{Kechris}).
\end{enumerate}

To prove the lemma, first recursively choose a collection $\set{K_n}{n \in \w}$ of subsets of $2^\w$, such that each $K_n$ is homeomorphic to $2^\w$, each $K_n$ is nowhere dense in $2^\w$, the $K_n$'s are pairwise disjoint, and each basic open subset of $2^\w$ contains some $K_n$. By fact $(3)$, $2^\w \setminus \bigcup_{n \in \w}K_n \homeo \w^\w$. This implies, by fact $(2)$, that there is a partition $\mathcal Q$ of $2^\w \setminus \bigcup_{n \in \w}K_n$ into copies of $2^\w$. By fact $(1)$, there is for each $n$ a partition $\P_n$ of $K_n$ into $\continuum$ copies of $2^\w$. Then $\P = \mathcal Q \cup \bigcup_{n \in \w}\P_n$ is the desired partition of $2^\w$.
\end{proof}

Note also that $2^\w$ can be partitioned into $\aleph_0$ copies of $2^\w$. (This follows from the fact that $2^\w \homeo 2^\w \times (\w+1)$, via Brouwer's theorem.)
However, in contrast to the lemma above, no partition of $2^\w$ into $\aleph_0$ copies of $2^\w$ can have the property that every open set contains $\aleph_0$ partition members: this is a consequence of the Baire Category Theorem, since every member of such a partition would need to be nowhere dense.
Generally speaking, given some ordinal $\a > 0$, it is independent of $\zfc + \continuum > \aleph_\a$ whether there is a partition of $2^\w$ into $\aleph_\a$ copies of $2^\w$ %It is consistent with arbitrarily large values of $\continuum$ that this holds for all such $\k$, and it is consistent with arbitrarily large values of $\continuum$ that it holds for no such $\k$ 
(see \cite{Brian&Miller}).

A \emph{packing} of Cantor spaces into a space $X$ is a collection of pairwise disjoint subspaces of $X$, each homeomorphic to $2^\w$. Equivalently, a packing of Cantor spaces into $X$ is a partition of a subspace of $X$ into copies of $2^\w$.

\begin{theorem}\label{thm:gen}
Suppose $X$ is a first countable space with $|X| \leq \continuum$. Then $X$ is $2^\w$-partitionable if and only if $X$ is $2^\w$-coverable.
\end{theorem}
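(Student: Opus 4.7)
The forward direction is immediate. For the converse, assume $X$ is first countable, $|X|\leq\continuum$, and $2^\w$-coverable; since $X$ then contains a copy of $2^\w$ we have $|X|=\continuum$, so we may enumerate $X=\{x_\a:\a<\continuum\}$. The plan is to build the desired partition by transfinite recursion of length $\continuum$, choosing at each stage $\a$ either nothing (if $x_\a$ is already covered by what has been chosen) or a new copy $C_\a$ of $2^\w$ containing $x_\a$ and disjoint from every copy chosen so far.

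Fix $\a$, and suppose we have already chosen a pairwise disjoint family $\D_\a=\{C_\beta:\beta\in I\cap\a\}$ of copies of $2^\w$ with $|\D_\a|<\continuum$, and suppose $x_\a\notin\bigcup\D_\a$. Following the proof of the implication $(3)\Rightarrow(2)$ in Lemma~\ref{lem:setup}, fix a decreasing neighborhood basis $\{U_n:n\in\w\}$ of $x_\a$; if for each $n$ we can find a copy $K_n\sub U_n\setminus\{x_\a\}$ of $2^\w$ that is disjoint from $\bigcup\D_\a$, then $C_\a:=\{x_\a\}\cup\bigcup_n K_n$ is a copy of $2^\w$ by Brouwer's characterization and is disjoint from every $C_\beta\in\D_\a$, exactly as required.

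The crux is thus the sub-claim that such $K_n$ exist, which I would isolate as follows: whenever $V\sub X$ is open and contains a copy of $2^\w$, and $\D$ is a pairwise disjoint family of copies of $2^\w$ in $X$ with $|\D|<\continuum$, then there is a copy of $2^\w$ in $V$ disjoint from $\bigcup\D$. The plan to attack it is to take any copy $K\sub V$ of $2^\w$ via Lemma~\ref{lem:setup}(3) and then apply Lemma~\ref{lem:break-up} inside $K$ to partition $K=\bigsqcup_{\xi<\continuum}K^\xi$ into $\continuum$ copies of $2^\w$ with the strong property that every nonempty open subset of $K$ contains $\continuum$ of the $K^\xi$. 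One then iterates through the members of $\D$, at each step passing to a sub-Cantor-space disjoint from the next $C\in\D$ by choosing an appropriate $K^\xi$ inside the current Cantor space (noting that the dense supply of $K^\xi$ guarantees plenty of candidates inside each nonempty open subset).

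The main obstacle I anticipate is bookkeeping at limit stages of this internal iteration through $\D$: a single $C\in\D$ can meet many of the $K^\xi$, and when $|\D|$ is uncountable a naive nested intersection of Cantor spaces need not itself be a Cantor space. The strong density condition in Lemma~\ref{lem:break-up} is precisely the extra structure one expects to exploit here: rather than shrinking blindly, one refines at each step by picking a $K^\xi$ surviving in a prescribed clopen piece, and at limits one glues the surviving pieces together via another application of Brouwer's characterization to recover a copy of $2^\w$ inside $V$ disjoint from $\bigcup\D$. Overcoming this obstacle is where the proof must do its real work; once the sub-claim is established, the transfinite recursion of the first two paragraphs runs through without further difficulty and produces the required partition.
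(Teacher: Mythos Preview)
Your sub-claim is false, and the difficulty is not merely bookkeeping at limit stages of the inner iteration. Take $X=2^\w$, fix $p\in X$, and for each $n$ let $s_n\in 2^{n+1}$ agree with $p$ on coordinates $<n$ and disagree at coordinate $n$; then $\D=\{[s_n]:n\in\w\}$ is a pairwise disjoint family of $\aleph_0<\continuum$ copies of $2^\w$ with $\bigcup\D=X\setminus\{p\}$, so no Cantor set in $V=X$ misses $\bigcup\D$. Nothing in your recursion prevents this configuration from arising: if the first $\w$ points of the enumeration fall one in each $[s_n]$ and at stage $n$ you choose $C_n=[s_n]$ (which is of the form $\{x_n\}\cup\bigcup_k K_k$ for suitable $K_k$ in basic neighborhoods of $x_n$), then when you reach $x_\a=p$ the recursion halts. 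Your plan for the sub-claim also breaks down well before any limit: if the starting $K\sub V$ happens to lie inside a single $C\in\D$, then every $K^\xi\sub K$ meets $C$, so there is no refinement to make even at the first step.

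The paper's proof avoids this by reversing the order of operations. Rather than building the partition point by point and hoping each new Cantor set can dodge the previous ones, it first fixes (via Zorn's Lemma) a \emph{maximal} packing $\mathcal Q$, then refines each member using Lemma~\ref{lem:break-up} to obtain a packing $\mathcal Q'$ with $\continuum$ members inside every nonempty open set. Only the leftover set $Y=X\setminus\bigcup\mathcal Q'$ then needs attention, and each $y\in Y$ is covered by borrowing countably many not-yet-used members of $\mathcal Q'$ from a neighborhood base at $y$; since at stage $\a$ only $|\a|\cdot\aleph_0<\continuum$ pieces have been borrowed while every open set still holds $\continuum$ of them, fresh pieces are always available. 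The essential point is that the paper never has to find a Cantor set disjoint from an arbitrary earlier configuration --- it only has to select an unused piece from a framework fixed once and for all.
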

\begin{proof}
If $|X| < \continuum$, then $X$ contains no copies of $2^\w$ and the theorem is trivially true. 
Let $X$ be a first countable space with $|X| = \continuum$. 
As we have already observed, if $X$ is $2^\w$-partitionable then $X$ is $2^\w$-coverable; so it suffices to prove the reverse implication.

Suppose $X$ is $2^\w$-coverable. By Lemma~\ref{lem:setup}, this means every nonempty open subset of $X$ contains a copy of $2^\w$.
Let $\mathcal Q$ be a maximal packing of Cantor spaces into $X$, i.e., a packing that cannot be extended to a larger packing because $X \setminus \bigcup \mathcal Q$ contains no copies of $2^\w$. Some such packing exists by Zorn's Lemma. 

Because $\mathcal Q$ is maximal, every nonempty open subset of $X$ meets some member of $\mathcal Q$. To see this, suppose instead that $U$ is a nonempty open subset of $X$ with $U \cap \bigcup \mathcal Q = \0$. Then there is some $K \sub U$ with $K \homeo 2^\w$. This contradicts the maximality of $\mathcal Q$, since $\mathcal Q \cup \{K\}$ is a strictly larger packing of Cantor spaces into $X$.

For each $K \in \mathcal Q$, there is by Lemma~\ref{lem:break-up} a partition $\mathcal P_K$ of $K$ into copies of $2^\w$ such that every nonempty open subset of $K$ contains $\continuum$-many members of $\mathcal P_K$. 
Let ${\mathcal Q}' = \bigcup_{K \in \mathcal Q} \mathcal P_K$.
Then ${\mathcal Q}'$ is a packing of Cantor spaces into $X$ with the property that every nonempty open subset of $X$ contains $\continuum$-many members of ${\mathcal Q}'$.

Let $Y = X \setminus \bigcup \mathcal Q' = X \setminus \bigcup \mathcal Q$. Let $|Y| = \k$, note that $\k \leq \continuum$, and fix an enumeration $\set{y_\a}{\a < \k}$ of $Y$.

We now choose via transfinite recursion a Cantor space $K(n,\a) \in \mathcal Q'$ for each pair $(n,\a)$ with $n \in \w$ and $\a < \k$.
At stage $\a$ of the recursion, assume $K(n,\xi)$ is already chosen for each $\xi < \a$ and $n \in \w$. Let $\set{U^\a_n}{n \in \w}$ be a countable neighborhood basis for $y_\a$, and
choose $K(n,\a) \in {\mathcal Q}'$, for each $n \in \w$, so that $K(n,\a) \notin \set{K(n,\xi)}{\xi < \a \text{ and } n \in \w}$, and $K(n,\a) \sub U^\a_n$.
Such a choice is possible, since $\card{\set{K(n,\xi)}{\xi < \a \text{ and } n \in \w}} = |\a| \cdot \aleph_0 < \continuum$, while each $U^\a_n$ contains $\continuum$-many members of ${\mathcal Q}'$.

For each $\a < \k$, let
$\textstyle Z_\a = \{y_\a\} \cup \bigcup_{n \in \w} K(n,\a).$
As in the proof of Lemma~\ref{lem:setup}, it follows from Brouwer's characterization of the Cantor space that $Z_\a \homeo 2^\w$ for every $\a < \k$.
Furthermore, if $\a < \b < \k$ then our choice of the $K(\xi,n)$ ensures that $Z_\a \cap Z_\b = \0$. Therefore
$$\P = \set{Z_\a}{\a < \k} \cup \big( \mathcal Q' \setminus \set{K(n,\a)}{n < \w \text{ and } \a < \k} \big)$$
is a partition of $X$ into copies of $2^\w$.
\end{proof}

As a counterpoint to this theorem, we next construct a first countable space of size $\continuum^+$ that is $2^\w$-coverable but not $2^\w$-partitionable. This shows two things: that the cardinality bound $|X| \leq \continuum$ in the previous theorem is necessary (and sharp), and that the ``metrizable'' assumption in Theorem~\ref{thm:c+omega} in the next section cannot be weakened to ``first countable''.

\begin{theorem}\label{thm:firstcountable}
There is a first countable, hereditarily normal space $X$ with $|X| = \continuum^+$ such that $X$ is $2^\w$-coverable but is not $2^\w$-partitionable.
\end{theorem}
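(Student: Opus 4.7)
My plan is to construct $X$ by attaching $\continuum^+$ new ``phantom limit'' points to a suitable first countable, hereditarily normal Hausdorff space $B$ of cardinality $\continuum$ in which every nonempty open set contains a copy of $2^\w$. Set $X = B \cup L$ with $L = \{y_\alpha : \alpha < \continuum^+\}$. For each $\alpha$, I would fix a sequence $(K_\alpha^m)_{m \in \w}$ of pairwise disjoint clopen Cantor subspaces of $B$ with $\bigcap_n \bigcup_{m \geq n} K_\alpha^m = \0$, and declare the basic neighborhoods of $y_\alpha$ to be $\{y_\alpha\} \cup \bigcup_{m \geq n} K_\alpha^m$ for $n \in \w$. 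The topological requirements on $X$ reduce to choosing these sequences with the pairwise Hausdorff separation property: for $\alpha \neq \beta$, some tail $\bigcup_{m \geq n} K_\alpha^m$ should be disjoint from some tail $\bigcup_{m \geq m'} K_\beta^m$.

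Once the sequences have been arranged, $2^\w$-coverability is immediate. Each point of $B$ lies in a Cantor subspace by the hypothesis on $B$. For each $y_\alpha$, the set $\{y_\alpha\} \cup \bigcup_m K_\alpha^m$ is compact, Hausdorff, zero-dimensional, and has no isolated points, hence is homeomorphic to $2^\w$ by Brouwer's characterization (exactly as in the proof of Lemma~\ref{lem:setup}). Lemma~\ref{lem:setup} then gives that $X$ is $2^\w$-coverable. First countability, Hausdorffness, $|X| = \continuum^+$, and $L$ being closed and discrete in $X$ all follow straightforwardly from the construction, and hereditary normality follows from the hereditary normality of $B$ together with the fact that basic neighborhoods of the phantom limits are clopen in $X$ and pairwise Hausdorff separated.

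For non-$2^\w$-partitionability, I would argue by cardinality. Since $L$ is closed and discrete in $X$, any compact subspace of $X$ meets $L$ in a compact discrete, hence finite, set. In particular, every copy of $2^\w$ in $X$ contains finitely many points of $L$ and therefore $\continuum$ points of $B$. In any hypothetical $2^\w$-partition $\P$ of $X$, the members of $\P$ have pairwise disjoint $B$-parts of size $\continuum$, so $|\P|\cdot\continuum \leq |B| = \continuum$ and hence $|\P| \leq \continuum$. But each member of $\P$ contains only finitely many points of $L$, so covering the $\continuum^+$-sized set $L$ would require $|\P| \geq \continuum^+$, a contradiction.

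The main obstacle is the selection of $B$ together with the family of sequences so that Hausdorff separation -- and consequently hereditary normality -- can be arranged for all $\continuum^+$ phantom limits simultaneously. The naive choice $B = \w^\w$ is too impoverished, since $|\mathcal{O}(\w^\w)| = \continuum$ and hence there are only $\continuum$-many countable descending chains of open sets, so at most $\continuum$-many filter bases that can be pairwise Hausdorff distinguished. Overcoming this requires taking $B$ to be a richer first countable hereditarily normal space of size $\continuum$ with at least $\continuum^+$-many pairwise Hausdorff-separable countable filter bases of open sets -- for instance, obtained from $\w^\w$ by an Aleksandrov-double-type augmentation, or by a transfinite recursion of length $\continuum^+$ that simultaneously enlarges the topology at each step to accommodate a new phantom limit while preserving hereditary normality.
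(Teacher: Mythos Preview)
Your outline has a clean non-partitionability argument (if $L$ is closed discrete then every copy of $2^\w$ meets $B$ in $\continuum$ points, so a partition has at most $\continuum$ members, too few to cover $L$), but the construction of the space is not actually carried out, and the two places where you wave your hands are both genuine difficulties.

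First, you explicitly acknowledge that you do not know how to choose $B$. A second countable $B$ has only $\continuum$ open sets, hence only $\continuum$ countable sequences of clopen sets and only $\continuum$ tail-filters available to serve as neighborhood bases for phantom limits; so some enrichment of the topology is mandatory. But any such enrichment must simultaneously preserve first countability at every old point of $B$, keep every nonempty open set large enough to contain a copy of $2^\w$, and manufacture $\continuum^+$ pairwise separated tail-filters. Neither the Aleksandrov-double suggestion nor the vague ``transfinite refinement'' explains how to balance these constraints, and it is not clear that it can be done. This is the heart of the construction, and you have not done it.

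Second, your claim that hereditary normality ``follows'' from hereditary normality of $B$ together with clopen bases and pairwise Hausdorff separation is not justified. Pairwise separation of the $y_\a$ does not give even ordinary normality: for any partition $L = L_0 \cup L_1$ into disjoint closed sets you must choose, uniformly in $\a$, a tail $\bigcup_{m \geq n(\a)} K_\a^m$ so that the tails coming from $L_0$ miss all the tails coming from $L_1$. This is a much stronger combinatorial requirement on the family of sequences than pairwise almost-disjointness, and you have not arranged it.

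The paper sidesteps both problems with a completely different construction. Rather than attaching $\continuum^+$ limit points to a size-$\continuum$ base, it uses the ordinal $\continuum^+$ itself as a spine: $X = E \cup (F \times 2^\w)$, where $E$ is the set of $\a < \continuum^+$ with $\cf(\a) = \w$ and $F$ is the set of successor ordinals, with basic neighborhoods of $\a \in E$ given by half-open intervals $(\b,\a]_X$. Hereditary normality is then free, because $X$ embeds as a subspace of a linearly ordered space. Non-partitionability is proved not by your counting argument but by Fodor's Lemma: in any putative partition, the Cantor set through each $\a \in E$ must meet some earlier $\{\b\} \times 2^\w$ with $\b < \a$, and pressing down forces $\continuum^+$ many of these Cantor sets to hit a single fiber of size $\continuum$.
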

\begin{proof}
Let $E$ denote the subset of the ordinal $\continuum^+$ consisting of all $\a < \continuum^+$ with $\mathrm{cf}(\a) = \w$, and let $F$ denote all the successor ordinals in $\continuum^+$. Roughly, our space $X$ is obtained from $E \cup F$ by expanding each member of $F$ to a small copy of $2^\w$.
More precisely, let
$X = E \cup (F \times 2^\w)$
and define a topology on $X$ as follows:
\begin{itemize}
\item[$\circ$] For each $\g \in F$, $\{\g\} \times 2^\w$ is clopen in $X$ and the natural projection mapping from $\{\g\} \times 2^\w$ to $2^\w$ is a homeomorphism.
\item[$\circ$] For each $\a \in E$ and each $\b \in E \cup F$ with $\b < \a$, let 
$$\qquad \quad (\b,\a]_X = \big( (\b,\a] \cap E \big) \cup \big( (\b,\a] \cap F \big) \times 2^\w.$$
The sets of this form constitute a local basis for $\a$.
\end{itemize}

Clearly every point in $F \times 2^\w$ has a countable neighborhood basis in $X$. If $\a \in E$, then there is an increasing sequence $\seq{\b_n}{n \in \w}$ of ordinals in $E \cup F$ with $\a = \sup_{n \in \w} \b_n$. Then $\set{(\b_n,\a]_X}{n \in \w}$ is a countable neighborhood basis for $\a$ in $X$. Hence $X$ is first countable.

Let $Y$ be the totally ordered set $\continuum^+ \times 2^\w$ with the lexicographic order (where we identify $2^\w$ with a subset of $\R$, say the middle-thirds Cantor set). We may identify $X$ with a subset of $Y$, namely $E \times \{0\} \cup F \times 2^\w$, where $0$ denotes the order-minimal member of $2^\w$. Putting the order topology on $Y$, it is straightforward to check that the topology on $X$ is the one it inherits as a subspace of $Y$. Therefore $X$ is a subspace of a LOTS (a Linearly Ordered Topological Space). Every LOTS is $T_5$ (i.e., hereditarily normal), and even more (e.g., every LOTS is hereditarily collectionwise normal \cite{Steen}). Hence $X$ is hereditarily (collectionwise) normal.

It is easy to see $|X| = \continuum^+$. It is also easy to see that $X$ satisfies condition $(3)$ from Lemma~\ref{lem:setup}, which implies that $X$ is $2^\w$-coverable.

It remains to show that $X$ fails to be $2^\w$-partitionable.
Aiming for a contradiction, suppose $\P$ is a partition of $X$ into copies of $2^\w$. 
For each $\a \in E$, let $K_\a$ denote the member of $\P$ containing $\a$.

Note that the topology $E$ inherits from $X$ is the same as the topology it inherits from $\continuum^+$ (when $\continuum^+$ is given its usual order topology). In this topology $E$ is scattered, and in particular contains no copies of $2^\w$. 
Hence for each $\a \in E$, we have $K_\a \not\sub E$, i.e., $K_\a \cap (\{\b\} \times 2^\w) \neq \0$ for some $\b \in F$. Furthermore, there is some $\b \in F$ with $\b < \a$ such that $K_\a \cap (\{\b\} \times 2^\w) \neq \0$, because otherwise $K_\a \cap (0,\a]_X$ would be both a subset of $E$ (which contains no copies of $2^\w$) and a nonempty open subset of $K_\a$ (which does contain a copy of $2^\w$).

For each $\a \in E$, let $f(\a)$ denote the least $\b \in F$ with $K_\a \cap (\{\b\} \times 2^\w) \neq \0$. 
By the previous paragraph, $f(\a) < \a$ for all $\a \in E$; in other words, $f$ is a regressive function on $E$.
Also, $E$ is a stationary subset of $\continuum^+$. 
By Fodor's Lemma, there is a stationary $S \sub E$ and some particular $\b \in F$ such that $f(\a) = \b$ for all $\a \in S$.
Now $|S| = \continuum^+$, and this implies $|\set{K_\a}{\a \in S}| = \continuum^+$ by the pigeonhole principle, since each $K_\a$ has size $\continuum$. Hence there are $\continuum^+$ distinct members of $\mathcal P$ that meet $\{\b\} \times 2^\w$. This is absurd, because the members of $\P$ are disjoint and $\card{\{\b\} \times 2^\w} = \continuum$.
\end{proof}

%%%%%%%%%%%%
\section{Partitioning small metrizable spaces}

%Before beginning the proof of the main theorem, let us briefly justify our focus on $2^\w$-partitionable spaces, as opposed to $Y$-partitionable spaces for some other $Y$. 

%\begin{observation}
%Let $Y$ and $Z$ be topological spaces, and suppose that $Y$ is $Z$-partitionable and $Z$ is $Y$-partitionable. Then a space $X$ is $Y$-partitionable if and only if it is $Z$-partitionable.
%\end{observation}

%Note that every $0$-dimensional perfect Polish space $Y$ is $2^\w$-partitionable (by our theorem below), $2^\w$ is $\w^\w$-partitionable (needs a proof), and $\w^\w$ is $Y$-partitionable for every $0$-dimensional perfect Polish space $Y$ (because $\w^\w \homeo Y \times \w^\w$). Hence our theorem applies with any zero-dimensional perfect Polish space in the place of $2^\w$.

In this section and the next we turn our attention to progressively more restrictive sub-classes of the first countable spaces: metrizable spaces in this section, and (Borel subsets of) completely metrizable spaces in the next. The following recharacterization of $2^\w$-partitionable metrizable spaces is the main ingredient for the main results in both sections.

\begin{theorem}\label{thm:TFAE}
For any metrizable space $X$, the following are equivalent:
\begin{enumerate}
\item $X$ is $2^\w$-partitionable.
\item There is a partition $\P$ of $X$ into copies of $2^\w$ such that  every open $U \sub X$ contains $|U|$ members of $\P$. 
\item There is a packing $\mathcal Q$ of Cantor spaces into $X$ such that every open $U \sub X$ contains $|U|$ members of $\mathcal Q$. 
\item There is a packing $\mathcal Q$ of Cantor spaces into $X$ such that every non-empty open $U \sub X$ meets some member of $\mathcal Q$, and if $|U| > \continuum$ then $U$ meets $|U|$ members of $\mathcal Q$.
\item For every open $U \sub X$, there is a packing of Cantor spaces into $U$ with cardinality $|U|$.
\end{enumerate}
\end{theorem}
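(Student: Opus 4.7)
The plan is to prove the equivalence along the cycle $(1)\Rightarrow(2)\Rightarrow(3)\Rightarrow(4)\Rightarrow(5)\Rightarrow(1)$. The arrows $(2)\Rightarrow(3)\Rightarrow(4)$ are immediate, using the observation that every nonempty open subset of a space satisfying any of these conditions has cardinality at least $\continuum$ (since it must contain a copy of $2^\w$). For $(1)\Rightarrow(2)$, I would refine each $K$ in a given partition $\P$ via Lemma~\ref{lem:break-up} to obtain a new partition $\P'$ in which every nonempty open subset of $K$ contains $\continuum$ pieces; a counting argument based on $|U|=\sum_{K\cap U\neq\0}|K\cap U|$ with $|K\cap U|\le\continuum$ then shows that every open $U\sub X$ contains $|U|$ members of $\P'$ (one piece per meeting $K$ when $|U|>\continuum$, and $\continuum$ pieces inside a single $K\cap U$ when $|U|\le\continuum$). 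For $(4)\Rightarrow(5)$, I would extract a Cantor subspace from each of the $|U|$ intersections $K\cap U$ when $|U|>\continuum$, and apply Lemma~\ref{lem:break-up} to a single $K\cap U$ when $|U|=\continuum$.

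The bulk of the work lies in the hard arrow $(5)\Rightarrow(1)$, which I plan to factor as $(5)\Rightarrow(3)\Rightarrow(1)$. For $(5)\Rightarrow(3)$, I would fix a $\s$-discrete base $\B$ of $X$, whose cardinality is at most $|X|$, and enumerate the pairs $(V,\gamma)$ with $V\in\B$ and $\gamma<|V|$ in an order of weakly increasing $|V_\alpha|$. By transfinite recursion, at stage $\alpha$ I apply $(5)$ to $V_\alpha$ to find a Cantor subspace $K_\alpha\sub V_\alpha$ drawn from the $(5)$-packing of $V_\alpha$ and disjoint from $\bigcup_{\beta<\alpha}K_\beta$; this is feasible because the previous choices have total cardinality at most $|\alpha|\cdot\continuum$, which the ordering keeps strictly below $|V_\alpha|$. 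The boundary case $|V|=\continuum$ is handled by invoking Theorem~\ref{thm:gen} on $V$ (which is first countable of size $\continuum$ and $2^\w$-coverable as a consequence of~$(5)$) to produce a full partition of $V$ into Cantor spaces en bloc.

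For $(3)\Rightarrow(1)$, given $\mathcal Q$ satisfying $(3)$ and $Y=X\setminus\bigcup\mathcal Q$, I plan to absorb the points of $Y$ by the method of Theorem~\ref{thm:gen}: enumerate $Y$ compatibly with the stratification $\nu(y)=\min_n|B_n(y)|$, and at stage $\alpha$ pick pairwise distinct members $K_n(y_\alpha)\in\mathcal Q$ with $K_n(y_\alpha)\sub B_n(y_\alpha)$, so that $\{y_\alpha\}\cup\bigcup_n K_n(y_\alpha)\cong 2^\w$ by Brouwer's characterization. The density property of $\mathcal Q$ furnishes $|B_n(y_\alpha)|$ candidates per step, exceeding the running count of used spaces when the enumeration is properly arranged. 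The principal obstacle throughout both recursions is the transfinite bookkeeping: one must verify at every stage that the pool of ``fresh'' Cantor spaces in the relevant basic open set or metric ball strictly exceeds what has been used. The cardinal arithmetic is tight precisely when these quantities coincide, and Lemma~\ref{lem:break-up} together with the $\s$-discrete structure of metrizable spaces is what makes the delicate boundary cardinalities tractable.
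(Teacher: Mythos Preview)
Your outline correctly identifies the easy implications and the location of the difficulty, but the transfinite-recursion arguments you sketch for $(5)\Rightarrow(3)$ and $(3)\Rightarrow(1)$ each have a genuine gap. In $(5)\Rightarrow(3)$, the assertion that the ordering ``keeps $|\alpha|\cdot\continuum$ strictly below $|V_\alpha|$'' is simply false when there are more than $|V_\alpha|$ basic sets of size $|V_\alpha|$ in your $\sigma$-discrete base: within that single level of the enumeration there are already more than $|V_\alpha|$ pairs. In $(3)\Rightarrow(1)$, the claim that ``the density property of $\mathcal Q$ furnishes $|B_n(y_\alpha)|$ candidates per step, exceeding the running count of used spaces when the enumeration is properly arranged'' is likewise unjustified: the number of previously chosen $K_m(y_\beta)$ lying inside a small ball $B_n(y_\alpha)$ is not controlled by $\nu(y_\alpha)$ or by the local count of $y_\beta$'s in that ball, because a $y_\beta$ living far from $y_\alpha$ may still have a large ball $B_m(y_\beta)$ reaching into $B_n(y_\alpha)$, and its $K_m(y_\beta)$ may sit there. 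No stratification by $\nu$ prevents this, and you give no alternative strategy.

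The paper's argument handles $(3)\Rightarrow(1)$ very differently, and explicitly remarks that ``the $K^y_n$ cannot be chosen one at a time via a simple transfinite recursion.'' It exploits paracompactness rather than $\sigma$-discreteness: for each $n$ it fixes a \emph{locally finite} open cover $\U_n$ of $X$ by sets of diameter at most $\frac{1}{n+1}$, so that every compact $C\in\mathcal Q$ lies in only finitely many $U\in\U_n$. After splitting each $C\in\mathcal Q$ into pieces $K^C_{n,k}$ indexed by $\omega\times\omega$, the paper defines an injection $(n,U,y)\mapsto K^{C}_{n,k}$ \emph{all at once}, where $C=f^U_n(y)$ for some fixed injection $f^U_n\colon Y\cap U\hookrightarrow\{C\in\mathcal Q:C\sub U\}$ (this is where $(3)$ is used), and $k$ records which of the finitely many $U\in\U_n$ containing $C$ is meant. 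No recursion and no global enumeration of $Y$ is needed. For $(5)\Rightarrow(3)$ the paper also avoids recursion entirely, passing through $(4)$: a \emph{maximal} packing obtained via Zorn's Lemma automatically meets every nonempty open set and, by a short counting argument using $(5)$, meets $|U|$ members whenever $|U|>\continuum$; refining via Lemma~\ref{lem:break-up} then yields $(3)$.
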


\noindent To prove the main results of this section and the next, we will really only use the equivalence of $(1)$ and $(5)$. 
But our proof that $(1)$ and $(5)$ are equivalent essentially involves proving their equivalence to statements $(2)$ through $(4)$ anyway, so we have chosen to make these waypoints explicit.

\begin{proof}[Proof of Theorem~\ref{thm:TFAE}]
It is clear that $(2) \Rightarrow (3) \Rightarrow (5)$. To prove the theorem, we show that $(5) \Rightarrow (4) \Rightarrow (3) \Rightarrow (1) \Rightarrow (2)$. 

We show first that $(5) \Rightarrow (4)$. Suppose $(5)$ holds.
Let $\mathcal Q$ be a maximal packing of Cantor spaces into $X$, i.e., a packing that cannot be extended to a larger packing because $X \setminus \bigcup \mathcal Q$ contains no copies of $2^\w$. Some such packing exists by Zorn's Lemma. 
We claim this $\mathcal Q$ satisfies $(4)$. 

Let $U$ be a nonempty open subset of $X$. By $(5)$, $U$ contains a copy $K$ of $2^\w$. Because $\mathcal Q$ is maximal, some member of $\mathcal Q$ meets $K$, since otherwise $\mathcal Q \cup \{K\}$ would be a packing of Cantor spaces into $X$ strictly larger than $\mathcal Q$. Hence $U$ meets some member of $\mathcal Q$. 

Now suppose in addition that $|U| = \k > \continuum$.
We claim that $U$ has nonempty intersection with $\k$ members of $\mathcal Q$.
Aiming for a contradiction, suppose instead that $\card{\set{K \in \mathcal Q}{U \cap K \neq \0}} = \mu < \k$.
By $(5)$, there is a packing $\mathcal R$ of Cantor spaces into $U$ with $|\mathcal R| = \k$.
Observe that 
$$\textstyle \card{U \cap \bigcup \mathcal Q} \,\leq\, \card{2^\w} \cdot \card{\set{K \in \mathcal Q}{U \cap K \neq \0}} \,=\, \continuum \cdot \mu \,<\, \k \,=\, |\mathcal R|.$$
It follows that not every member of $\mathcal R$ contains a point of $\bigcup \mathcal Q$; i.e., there is some $K \in \mathcal R$ with $K \cap \bigcup \mathcal Q = \0$. This contradicts the maximality of $\mathcal Q$, as then $\mathcal Q \cup \{K\}$ is a packing of Cantor spaces into $X$ properly extending $\mathcal Q$.
Therefore 
$\card{\set{K \in \mathcal Q}{U \cap K \neq \0}} = \k$, as claimed.

To see that $(4) \Rightarrow (3)$, let $\mathcal Q$ be a packing of Cantor spaces into $X$ having the property described in $(4)$. For each $K \in \mathcal Q$, let $\P_K$ be a partition of $K$ into $\continuum$ pieces such that every nonempty open $V \sub C$ contains $\continuum$ members of $\P_K$. (Such a partition exists by Lemma~\ref{lem:break-up}.) Let ${\mathcal Q'} = \bigcup_{K \in \mathcal Q} \P_K$. Clearly ${\mathcal Q'}$ is a packing of Cantor spaces into $X$, and we claim that it has the property described in $(3)$. To see this, let $U \sub X$ be open. 
Note that if $U \neq \0$, then $|U| \geq \continuum$, because $(4)$ implies that $U$ contains an open subset of a copy of $2^\w$.
If $|U| = \continuum$, then there is some $K \in \mathcal Q$ with $U \cap K \neq \0$. This implies $U \cap K$ contains $\continuum$ members of $\P_K$, hence $U \cap K$ contains $\continuum = |U|$ members of ${\mathcal Q'}$.
If $|U| > \continuum$, then $U$ meets $|U|$ members of $\mathcal Q$; and for each such $K \in \mathcal Q$, there are $\continuum$-many members of $\P_K \sub {\mathcal Q'}$ contained in $U \cap K$. Hence $U$ contains $\continuum \cdot |U| = |U|$ members of ${\mathcal Q'}$. 

The argument that $(1) \Rightarrow (2)$ is similar. Let $\P$ be a partition of $X$ into copies of $2^\w$. As in the previous paragraph, obtain a new partition $\P'$ of $X$ by sub-dividing each member of $\P$ into $\continuum$-many copies of $2^\w$, using Lemma~\ref{lem:break-up}. Clearly $\P'$ is a partition of $X$ into copies of $2^\w$, and we claim that it has the property described in $(2)$. 
To see this, let $U \sub X$ be open. Note that $U \neq \0$ implies $|U| \geq \continuum$, because $U$ contains a copy of $2^\w$ by Lemma~\ref{lem:setup}.
If $|U| = \continuum$, then $U$ must have nonempty intersection with at least one member of $\P$, and it follows that $U$ contains $\continuum = |U|$ members of $\P'$.
If $|U| > \continuum$, then $U$ must have nonempty intersection with $|U|$ members of $\P$ (because $\P$ is a partition of $U$ into sets of size $\continuum < |U|$), so $U$ contains $|U|$ members of $\P'$. 

Finally, let us show $(3) \Rightarrow (1)$. 
Let $\mathcal Q$ be a packing of Cantor spaces into $X$ such that every open $U \sub X$ contains $|U|$ members of $\mathcal Q$.
Let $Y = X \setminus \bigcup \mathcal Q$.

Briefly, we describe in this paragraph the strategy of the proof. (This paragraph may be skipped by the reader who wants only a formal proof.) 
%If $|Y| = 1$, then we could modify the packing $\mathcal Q$ to obtain a partition as follows. First, choose a descending neighborhood basis $\set{U_n}{n \in \w}$ for the unique $y \in Y$. Then choose for each $U_n$ some $K_n \in \mathcal Q$ with $K_n \sub U_n$. Note that $Z_y = \{y\} \cup \bigcup_{n \in \w}K_n$ is homeomorphic to $2^\w$ (as in the proof of Lemma~\ref{lem:setup}), hence $\P = \{Z_y\} \cup \left( \mathcal Q \setminus \set{K_n}{n \in \w} \right)$ is a partition of $X$ into copies of $2^\w$. In what follows, we employ a version of this strategy, finding a countably infinite sequence $\seq{K^y_n}{n \in \w}$ of Cantor spaces converging to each $y \in Y$, and then ``borrowing'' these from $\mathcal Q$ (rather, a modification $\mathcal Q'$ of $\mathcal Q$) to cover each point of $Y$ in a copy of $2^\w$, namely $\{y\} \cup \bigcup_{n \in \w}K^y_n$.
Reviewing the proof of Theorem~\ref{thm:gen}, one may describe the main idea as first choosing for each $y \in Y$ some sequence $\seq{K^y_n}{n \in \w}$ of Cantor spaces converging to $y$, then ``borrowing'' these from $\mathcal Q$ (rather, a modification $\mathcal Q'$ of $\mathcal Q$) to cover each point of $Y$ with a copy of $2^\w$, namely $\{y\} \cup \bigcup_{n \in \w}K^y_n$. 
The strategy here is essentially the same, but with the added difficulty that the $K^y_n$ cannot be chosen one at a time via a simple transfinite recursion. Instead we will need to leverage the paracompactness of $X$ to make all of our choices simultaneously (and with a more complicated indexing).
%Recall that the $K^y_n$ must be chosen in such a way that $K^y_n \cap K^{y'}_m = \0$ whenever $y \neq y'$, so that in the end we really do get a partition.

For each $C \in \mathcal Q$, let $\set{K_{n,k}^C}{n,k \in \w}$ be a partition of $C$ into countably many copies of $2^\w$ (indexed by $\w \times \w$).
Let $\mathcal Q' = \bigcup_{C \in \mathcal Q} \{ K_{n,k}^C \,:\, n,k \in \w \}$.

For each $n \in \w$, there is an open cover $\U_n$ of $X$ such that
\begin{enumerate}
\item the diameter of each $U \in \U_n$ is at most $\frac{1}{n+1}$, and
\item each $x \in X$ has a neighborhood that meets only finitely many members of $\U_n$.
\end{enumerate}
Recall that any open cover satisfying $(2)$ is called \emph{locally finite}; so the existence of some such $\U_n$ is equivalent to the statement that the cover of $X$ by open sets of diameter $\leq \frac{1}{n+1}$ has a locally finite refinement. This is true because every metric space is paracompact \cite{Stone}. Note that $(2)$ implies:
\begin{enumerate}
\item[$(2')$] every compact $C \sub X$ meets only finitely many members of $\U_n$.
\end{enumerate}
To see this, note that, by $(2)$, $C$ can be covered with open sets each of which meets finitely many members of $\U_n$; by compactness, $C$ can be covered by finitely many such sets.

We claim there is an injection $K$ from $\set{(n,U,y)}{y \in Y \cap U \text{ and } U \in \U_n}$ into $\mathcal Q'$ such that $K(n,U,y) \sub U$ for all triples $(n,U,y)$. In other words, we can assign some $K(n,U,y) \in \mathcal Q'$ to each triple $(n,U,y)$ with $y \in Y \cap U$ and $U \in \U_n$ such that different triples index different packing members, and $K(n,U,y) \sub U$. 

For each $n \in \w$ and $U \in \U_n$, let $f_n^U$ be an injective function from $Y \cap U$ into $\set{C \in \mathcal Q}{C \subseteq U}$.
Such an injection exists because $\mathcal Q$ is a witness to $(3)$, which means $|Y \cap U| \leq |U| = \card{\set{C \in \mathcal Q}{C \subseteq U}}$.
For each $n \in \w$ and $C \in \mathcal Q$, there are only finitely many $U \in \U_n$ with $C \sub U$ (by property $(2')$ above); 
fix an enumeration $U^{n,C}_0,\dots,U^{n,C}_\ell$ of these finitely many sets. 
For each $n \in \w$, each $U \in \U_n$, and each $y \in Y \cap U$, define
$K(n,U,y) = K_{n,k}^C$ where $k$ is the (unique) index with $U = U^{n,C}_k$, and $C = f^U_n(y)$.

It follows immediately from the definition of $K$ that $K(n,U,y) \sub U$ whenever $y \in U \in \U_n$.
Let us check that $K$ is injective. 
Because the members of $\mathcal Q$ are pairwise disjoint, $K_{n,k}^C = K_{m,\ell}^{C'}$ only if $C = C'$; and because all the $K_{n,k}^C$ are pairwise disjoint for a given $C \in \mathcal Q$, $K_{n,k}^C = K_{m,\ell}^{C'}$ only if $C = C'$, $n=m$, and $k=\ell$.
It follows that if $m \neq n$, then $K(m,\cdot,\cdot) \neq K(n,\cdot,\cdot)$. 
Now suppose that $U,V \in \U_n$ with $U \neq V$. If $f^U_n(y) = f^V_n(y') = C$ for some $y,y' \in Y$, then $U = U^{n,C}_k$ and $V = U^{n,C}_\ell$ for some $k \neq \ell$, and this implies $K(n,U,y) = K_{n,k}^C \neq K_{n,\ell}^C = K(n,V,y')$. If, on the other hand, $f^U_n(y) \neq f^V_n(y')$, then $K(n,U,y) \neq K(n,V,y')$ because $K(n,U,y) \sub f^U_n(y)$ and $K(n,V,y') \sub f^V_n(y')$. Either way, $U \neq V$ implies $K(n,U,\cdot) \neq K(n,V,\cdot)$. Finally, if $y \neq y'$ then $K(n,U,y) \neq K(n,U,y')$ because $f^U_n$ is injective, and $K(n,U,y) \sub f^U_n(y)$ while $K(n,U,y') \sub f^U_n(y')$.

For each $y \in Y$, let
$$\textstyle Z_y = \{y\} \cup \bigcup \set{K(n,U,y)}{n \in \w \text{ and } y \in U \in \U_n}.$$
Given $y \in Y$, note that $K(n,U,y) \sub U \sub B_{\frac{1}{n+1}}(y)$ whenever $y \in U \in \U_n$, by part $(1)$ of our choice of the $\U_n$.
Therefore each open ball around $y$ contains all but finitely many of the $K(n,U,y)$, each of which is a copy of $2^\w$. 
%Furthermore, the (countably many) $K(n,U,y)$ are pairwise disjoint (because $K$ is injective), and none of them contains $y$ (because $y \notin \bigcup \mathcal Q$).
It follows from this, via Brouwer's characterization of the Cantor space, that $Z_y \homeo 2^\w$ for each $y \in Y$.

Because $K$ is injective, if $y,y' \in Y$ and $y \neq y'$, then $Z_y \cap Z_{y'} = \0$. Thus
$$\P = \set{Z_y}{y \in Y} \cup \set{C \in \mathcal Q'}{C \notin \mathrm{Image}(K)}$$
is a partition of $X$ into copies of $2^\w$.
\end{proof}

Given a topological space $X$, recall that a \emph{cellular family} in $X$ is a collection $\mathcal S$ of pairwise disjoint, nonempty open subsets of $X$. The \emph{cellularity} of $X$, denoted $c(X)$, is defined as
$$c(X) = \sup \set{\card{\mathcal S}}{\mathcal S \text{ is a cellular family in }X}.$$ 
By a theorem of Erd\H{o}s and Tarski \cite{Erdos&Tarski}, if $c(X)$ is not a regular limit cardinal, then there is a cellular family in $X$ of cardinality $c(X)$ (i.e., the supremum in the definition of $c(X)$ is attained). 
It is also proved in \cite{Erdos&Tarski} that if $X$ is metrizable, then the supremum in the definition of $c(X)$ is attained regardless of whether $c(X)$ is a regular limit cardinal or not.

\begin{lemma}\label{lem:sizes}
If $X$ is a metrizable space, then $c(X) \leq |X| \leq c(X)^{\aleph_0}$.
\end{lemma}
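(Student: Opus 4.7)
The first inequality $c(X) \leq |X|$ is immediate: from any cellular family $\mathcal{S}$, choosing one point from each member gives an injection $\mathcal{S} \hookrightarrow X$, because the members are pairwise disjoint and nonempty.

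For the upper bound $|X| \leq c(X)^{\aleph_0}$, the finite case is trivial, so I may assume $X$ is infinite. The plan is to chain together three standard cardinal-function inequalities: $w(X) \leq c(X) \cdot \aleph_0$, $d(X) \leq w(X)$, and $|X| \leq d(X)^{\aleph_0}$ (the last valid in any first countable Hausdorff space, via the fact that every point is the limit of a sequence from a dense set). The first inequality is where metrizability does real work. I would invoke the Bing--Nagata--Smirnov metrization theorem to fix a $\s$-discrete base $\B = \bigcup_{n \in \w} \B_n$ for $X$. Each $\B_n$, being a discrete family of nonempty open sets, is in particular a cellular family, so $|\B_n| \leq c(X)$, giving $|\B| \leq c(X) \cdot \aleph_0$.

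Choosing a point from each basic open set then produces a dense set $D \sub X$ with $|D| \leq c(X) \cdot \aleph_0$. Because $X$ is metrizable, every point of $X$ is the limit of a sequence from $D$, so the limit map defined on the convergent sequences of $D^\w$ surjects onto $X$; hence $|X| \leq |D|^{\aleph_0} \leq (c(X) \cdot \aleph_0)^{\aleph_0} = c(X)^{\aleph_0}$, using that an infinite metrizable space already has $c(X) \geq \aleph_0$ (for example by constructing disjoint open annuli around any non-isolated point, or by noting that if all points of $X$ are isolated then $X$ is itself an infinite discrete cellular family).

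The argument is essentially a careful chaining of standard cardinal-invariant inequalities for metric spaces, so no serious obstacle arises; the only nontrivial input is the existence of a $\s$-discrete base, which is the classical content of the metrization theorems.
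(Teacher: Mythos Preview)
Your proof is correct and follows essentially the same route as the paper: both arguments first bound the weight of $X$ by $c(X)$ (the paper cites Engelking, you derive it from a $\sigma$-discrete base), and then use the countable local structure of metrizable spaces to conclude $|X| \leq w(X)^{\aleph_0}$. The only cosmetic difference is in this last step---the paper uses that every point is $G_\delta$ and hence determined by a countable subfamily of the basis, whereas you pass through a dense set and sequential limits---but these are interchangeable standard moves.
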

\begin{proof}
The inequality $c(X) \leq |X|$ holds in any space, because if $\mathcal S$ is a cellular family in $X$, then any choice function on $\mathcal S$ gives an injection $\mathcal S \to X$, so that $|\mathcal S| \leq |X|$.
If $X$ is a metrizable space, then $X$ has a basis $\B$ of size $c(X)$ (see \cite[Theorem 4.1.15]{Engelking}). Furthermore, every point of $X$ can be determined uniquely by a countable subset of $\B$ (because every point in a metrizable space is $G_\delta$). It follows that $|X| \leq |\B|^{\aleph_0} = c(X)^{\aleph_0}$.
\end{proof}

\begin{lemma}\label{lem:sizes2}
If $X$ is metrizable and $\continuum < |X| \leq \continuum^{+\w}$, then $c(X) = |X|$.
\end{lemma}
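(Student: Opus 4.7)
The plan is to combine Lemma~\ref{lem:sizes} with a short piece of cardinal arithmetic. Lemma~\ref{lem:sizes} gives $c(X) \leq |X| \leq c(X)^{\aleph_0}$, so it suffices to argue that the hypothesis $\continuum < |X| \leq \continuum^{+\w}$ forces $c(X)^{\aleph_0} = c(X)$, at which point $c(X) \leq |X| \leq c(X)^{\aleph_0} = c(X)$ yields the conclusion.

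The first step is to rule out $c(X) \leq \continuum$. If that were the case, then $|X| \leq c(X)^{\aleph_0} \leq \continuum^{\aleph_0} = \continuum$, contradicting the hypothesis $|X| > \continuum$. Hence $c(X) > \continuum$. Since also $c(X) \leq |X| \leq \continuum^{+\w}$, and the only cardinals in the interval $(\continuum, \continuum^{+\w}]$ are $\continuum^{+n}$ for $n \in \{1, 2, \ldots, \w\}$, the value of $c(X)$ must be one of these.

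The second step is a case split. If $c(X) = \continuum^{+\w}$, then $|X| \geq c(X) = \continuum^{+\w}$ and $|X| \leq \continuum^{+\w}$ together give $c(X) = |X|$ immediately. If instead $c(X) = \continuum^{+n}$ for some finite $n \geq 1$, then Hausdorff's formula $\aleph_{\a+1}^{\aleph_0} = \aleph_\a^{\aleph_0} \cdot \aleph_{\a+1}$, combined with $\continuum^{\aleph_0} = \continuum$, gives by induction on $n$ that $(\continuum^{+n})^{\aleph_0} = \continuum^{+n}$. Hence $|X| \leq c(X)^{\aleph_0} = c(X)$, and combined with $c(X) \leq |X|$ we obtain $c(X) = |X|$.

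There is no real obstacle here; the proof is pure cardinal arithmetic with Lemma~\ref{lem:sizes} as the bridge to topology. The role of the hypothesis $|X| \leq \continuum^{+\w}$ is precisely to keep $c(X)$ either below the first limit cardinal above $\continuum$ (where $\cf(c(X)) > \w$ and Hausdorff's formula applies cleanly) or equal to $\continuum^{+\w}$ itself (in which case the upper bound on $|X|$ already forces the conclusion and the possibly bad inequality $c(X)^{\aleph_0} > c(X)$ is harmless).
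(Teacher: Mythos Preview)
Your proof is correct and follows essentially the same route as the paper's: both use Lemma~\ref{lem:sizes} to sandwich $|X|$ between $c(X)$ and $c(X)^{\aleph_0}$, rule out $c(X) \leq \continuum$, and then split into the cases $c(X) = \continuum^{+n}$ (finite $n$, where $(\continuum^{+n})^{\aleph_0} = \continuum^{+n}$) and $c(X) = \continuum^{+\w}$. The only cosmetic difference is that you invoke Hausdorff's formula by name whereas the paper writes out the inductive computation directly.
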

\begin{proof}
If $n < \w$, then $(\continuum^{+n})^{\aleph_0} = \continuum^{+n}$. (This well known fact of cardinal arithmetic is easily proved by induction on $n$. For $n = 0$ we have $\continuum^{\aleph_0} = \continuum$. For the inductive step, observe that, because $\continuum^{+n}$ has uncountable cofinality, $(\continuum^{+n})^{\aleph_0} = \sum_{\a < \continuum^{+n}} \a^{\aleph_0} \leq \continuum^{+n} \cdot (\continuum^{+(n-1)})^{\aleph_0} = \continuum^{+n} \cdot \continuum^{+n-1} = \continuum^{+n}$.) 

Applying the previous lemma: if $c(X) \leq \continuum$, then $|X| \leq c(X)^{\aleph_0} \leq \continuum^{\aleph_0} = \continuum$, and if $c(X) > \continuum^{+\w}$ then $|X| \geq c(X) > \continuum^{+\w}$. Therefore, our assumption that $\continuum < |X| \leq \continuum^{+\w}$ implies that $\continuum < c(X) \leq \continuum^{+\w}$.
If $c(X) = \continuum^{+n}$ for some $n < \w$, then $c(X)^{\aleph_0} = c(X)$ by the previous paragraph, and the previous lemma gives $|X| = c(X)$.
If $c(X) = \continuum^{+\w}$, then $\continuum^{+\w} = c(X) \leq |X| \leq \continuum^{+\w}$, and again $|X| = c(X)$. 
\end{proof}

\begin{lemma}\label{lem:sizes3}
If $X$ is metrizable and $2^\w$-coverable, and if $|X| \leq \continuum^{+\w}$, then there is a packing of Cantor spaces into $X$ with size $|X|$.
\end{lemma}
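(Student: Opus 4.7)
The plan is to split by $|X|$ and combine the topological tools from Lemmas~\ref{lem:setup} and~\ref{lem:break-up} with the cardinal-arithmetic control afforded by Lemma~\ref{lem:sizes2}. We may assume $X$ is nonempty (otherwise the empty packing works), and since $X$ is $2^\w$-coverable, Lemma~\ref{lem:setup} guarantees that every nonempty open subset of $X$ contains a copy of $2^\w$; in particular $|X| \geq \continuum$.

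If $|X| = \continuum$, I would pick any copy $K \homeo 2^\w$ in $X$ (using Lemma~\ref{lem:setup}) and apply Lemma~\ref{lem:break-up} to partition $K$ into pairwise disjoint copies of $2^\w$. Taking $U = K$ in that lemma shows the partition has at least $\continuum$ members, and it has at most $\continuum$ members since $|K| = \continuum$. This partition is the required packing.

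If $\continuum < |X| \leq \continuum^{+\w}$, then Lemma~\ref{lem:sizes2} gives $c(X) = |X|$. Because $X$ is metrizable, the Erd\H{o}s--Tarski result cited just before Lemma~\ref{lem:sizes} ensures that the supremum in the definition of $c(X)$ is attained, so $X$ admits a cellular family $\mathcal S$ with $|\mathcal S| = |X|$. For each $S \in \mathcal S$ use Lemma~\ref{lem:setup}(3) to choose a copy $K_S \homeo 2^\w$ with $K_S \sub S$; pairwise disjointness of the $S$'s transfers to the $K_S$'s, so $\set{K_S}{S \in \mathcal S}$ is a packing of Cantor spaces into $X$ of cardinality $|X|$.

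I do not anticipate any serious obstacle: the nontrivial cardinal arithmetic has already been carried out in Lemma~\ref{lem:sizes2}, and producing copies of $2^\w$ in open sets is precisely what Lemma~\ref{lem:setup} delivers. The one subtle point worth flagging is that we genuinely need the ``supremum is attained'' strengthening of the Erd\H{o}s--Tarski theorem in the metrizable setting, rather than its general form, to secure a cellular family (and hence a packing) of \emph{exact} cardinality $|X|$ when $|X|$ happens to be a limit cardinal such as $\continuum^{+\w}$.
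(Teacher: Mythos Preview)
Your proposal is correct and follows essentially the same argument as the paper's own proof: the same case split at $|X|=\continuum$, the same use of Lemma~\ref{lem:break-up} in the small case, and the same appeal to Lemma~\ref{lem:sizes2} together with the Erd\H{o}s--Tarski attainment result for metrizable spaces in the large case. Your remark about needing attainment of the supremum when $|X|=\continuum^{+\w}$ is exactly the point the paper relies on as well.
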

\begin{proof}
If $X = \0$ then the lemma is trivially true. If $X \neq \0$ and $X$ is $2^\w$-coverable, then $|X| \geq |2^\w| = \continuum$. So let us assume $\continuum \leq |X| \leq \continuum^{+\w}$.

If $|X| = \continuum$ and $X$ is $2^\w$-coverable, then $X$ contains a copy $K$ of $2^\w$, and there is a partition of $K$ into $\continuum$ copies of $2^\w$ by Lemma~\ref{lem:break-up}. This partition of $K$ is a packing Cantor spaces into $X$ with size $|X|$.

If $\continuum < |X| \leq \continuum^{+\w}$, then $c(X) = |X|$ by the previous lemma. By the aforementioned results of Erd\H{o}s and Tarski, there is a cellular family $\mathcal S$ in $X$ with $|\mathcal S| = c(X) = |X|$. By Lemma~\ref{lem:setup}, each $U \in \mathcal S$ contains a copy $K_U$ of $2^\w$. Then $\set{K_U}{U \in \mathcal S}$ is a packing of Cantor spaces into $X$ of size $|X|$.
\end{proof}

With these lemmas in place, the main result of this section follows easily:

\begin{theorem}\label{thm:c+omega}
Suppose $X$ is a metrizable space with $|X| \leq \continuum^{+\w}$. Then $X$ is $2^\w$-partitionable if and only if $X$ is $2^\w$-coverable.
\end{theorem}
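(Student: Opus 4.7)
The forward implication is immediate, as any partition is also a covering. For the reverse, my plan is to verify condition (5) of Theorem~\ref{thm:TFAE} for $X$, which will give $2^\w$-partitionability directly.

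First I would reduce to checking, for each open $U \sub X$, that $U$ itself satisfies the hypotheses of Lemma~\ref{lem:sizes3}. Since $X$ is metrizable, so is $U$; clearly $|U| \leq |X| \leq \continuum^{+\w}$; and $2^\w$-coverability transfers from $X$ to $U$ via Lemma~\ref{lem:setup}. Indeed, if $X$ is $2^\w$-coverable, then every nonempty open subset of $X$ contains a copy of $2^\w$ by Lemma~\ref{lem:setup}(3); since $U$ is open in $X$, every nonempty relatively open subset of $U$ is also open in $X$, hence contains a copy of $2^\w$, so $U$ satisfies Lemma~\ref{lem:setup}(3) and is therefore $2^\w$-coverable.

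Applying Lemma~\ref{lem:sizes3} to $U$ then produces a packing of Cantor spaces into $U$ of cardinality $|U|$, which is exactly condition (5) of Theorem~\ref{thm:TFAE}. That theorem then yields a partition of $X$ into copies of $2^\w$.

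Since every nontrivial ingredient has already been isolated in the lemmas preceding the statement, there is no real obstacle here; the only thing to be careful about is the case $U = \emp$ (trivially a packing of size $0$ exists) and the case $|X| < \continuum$ (in which case $X$ cannot be $2^\w$-coverable unless $X = \emp$, so the equivalence is vacuous). The proof is essentially a two-line assembly: coverability of $U$ by Lemma~\ref{lem:setup}, then a $|U|$-sized packing by Lemma~\ref{lem:sizes3}, then invoke $(5)\Rightarrow(1)$ of Theorem~\ref{thm:TFAE}.
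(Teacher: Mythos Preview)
Your proposal is correct and follows essentially the same route as the paper's own proof: both verify condition~(5) of Theorem~\ref{thm:TFAE} by applying Lemma~\ref{lem:sizes3} to each open $U\subseteq X$, after noting that $U$ inherits metrizability, the cardinality bound, and (via Lemma~\ref{lem:setup}) $2^\w$-coverability from $X$. If anything, you spell out the transfer of coverability to $U$ more explicitly than the paper does.
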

\begin{proof}
Let $X$ be a nonempty metrizable space (the theorem is trivially true for $X = \0$), and suppose $|X| \leq \continuum^{+\w}$. 
Every $2^\w$-partitionable space is $2^\w$-coverable, so we must show that if $X$ is $2^\w$-coverable then $X$ is also $2^\w$-partitionable. So suppose $X$ is $2^\w$-coverable.

If $U$ is a nonempty open subset of $X$, then the previous lemma implies there is a packing of Cantor spaces into $U$ with size $|U|$. By Theorem~\ref{thm:TFAE}, this implies $X$ is $2^\w$-partitionable.
\end{proof}

To end this section, we now give an example showing that the cardinality bound $|X| \leq \continuum^{+\w}$ in Theorem~\ref{thm:c+omega} is necessary and sharp.

Suppose $F_0, F_1, F_2, \dots$ are ordered sets. If $f,g \in \prod_{n \in \w}F_n$, then we write $f <^* g$ to mean that $f(n) < g(n)$ for all but finitely many $n \in \w$.

\begin{lemma}\label{lem:noCantor}
Let $F_0, F_1, F_2, \dots$ be totally ordered sets, each carrying the discrete topology. A subset of $\prod_{n \in \w}F_n$ cannot be both homeomorphic to $2^\w$ and well ordered by $<^*$.
\end{lemma}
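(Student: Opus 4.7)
The plan is to argue by contradiction, using compactness to reduce to the case of finite fibres and then extract a Ramsey-style pigeonhole contradiction from the $\w_1$-long chain supplied by the hypothetical well-ordering.

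Suppose some $K \sub \prod_{n \in \w} F_n$ is simultaneously $\homeo 2^\w$ and well-ordered by $<^*$. Since $K$ is compact and each $F_n$ is discrete, the projection $\pi_n(K)$ is a compact subset of $F_n$ and hence finite; after replacing each $F_n$ by $\pi_n(K)$ we may assume every $F_n$ is a finite totally ordered set. Because $|K| = \continuum \geq \aleph_1$, the well-ordering furnishes an $\w_1$-chain $(x_\alpha)_{\alpha < \w_1}$ in $K$ with $x_\alpha <^* x_\beta$ whenever $\alpha < \beta$ (e.g.\ the initial $\w_1$-segment).

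The heart of the argument is a colouring step. For each pair $\alpha < \beta$, let $N(\alpha,\beta) \in \w$ be the least $N$ with $x_\alpha(n) < x_\beta(n)$ for all $n \geq N$; this exists because $x_\alpha <^* x_\beta$. The assignment $(\alpha,\beta) \mapsto N(\alpha,\beta)$ colours $[\w_1]^2$ with countably many colours, and the partition relation $\w_1 \to (\w)^2_\w$ (a standard consequence of iterated uncountable pigeonhole) produces an infinite set $\{\alpha_0 < \alpha_1 < \alpha_2 < \cdots\} \sub \w_1$ on which $N$ is constantly some $N_0 \in \w$. Evaluating the chain condition at the single coordinate $n = N_0$ then gives
\[ x_{\alpha_0}(N_0) < x_{\alpha_1}(N_0) < x_{\alpha_2}(N_0) < \cdots, \]
an infinite strictly increasing sequence inside the finite totally ordered set $F_{N_0}$, which is absurd.

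The step I expect to be most delicate is the reduction to finite fibres and its interplay with the pigeonhole: in $\w^\w$ one genuinely does have $\w_1$-chains in $<^*$ (for instance, scales of length $\mathfrak{b}$), so it is essential that compactness of $K$ force each $\pi_n(K)$ to be finite—without this the final ``infinite increasing sequence in a finite totally ordered set'' contradiction has nothing to latch onto.
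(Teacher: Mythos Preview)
Your proof is correct, and takes a genuinely different route from the paper.  Both arguments begin identically, using compactness of $K$ to reduce to the case where each $F_n$ is finite.  From there the paper observes that $<^*$ is a Borel relation on the Polish space $\prod_{n\in\w}F_n \homeo 2^\w$ and invokes the descriptive-set-theoretic fact (cited from Kechris) that no Borel relation on $2^\w$ can well-order an uncountable Borel set.  Your argument instead extracts an $\w_1$-chain from the well-ordering and applies the elementary partition relation $\w_1 \to (\w)^2_\w$ to the colouring $N(\a,\b)$, producing an infinite strictly increasing sequence in a single finite coordinate.  Your approach is more elementary and self-contained---it avoids the black-box appeal to Borel determinacy or the perfect set property---while the paper's route is shorter but imports a nontrivial result from descriptive set theory.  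Your closing remark about why the finiteness of the fibres is essential (since genuine $\w_1$-scales exist in $\w^\w$) is well taken and correctly identifies where the compactness hypothesis does its work.
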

\begin{proof}
Fix $C \sub \prod_{n \in \w}F_n$ with $C \homeo 2^\w$.
For each $n \in \w$, let $P_n$ denote the projection of $C$ onto $F_n$, i.e., $P_n = \set{p \in F_n}{x(n) = p \text{ for some } x \in C}$. Then $\set{\pi_n^{-1}(p)}{p \in P_n}$ is a partition of $C$ into nonempty clopen sets. Because $C$ is compact, $P_n$ is finite. Replacing $F_n$ with $P_n$ if necessary, we may (and do) assume that each $F_n$ is finite.

The relation $<^*$ is a Borel relation on $\prod_{n \in \w}F_n$, and (by the assumption in the previous sentence) $\prod_{n \in \w}F_n \homeo 2^\w$. But it is known that no Borel relation on $2^\w$ can be a well ordering: therefore $<^*$ is not a well ordering of $\prod_{n \in \w}F_n$, nor is its restriction to $C$ (which is a Borel relation on $C$) a well ordering of $C$. (For the fact that no Borel relation on $2^\w$ is a well ordering, see \cite[Theorem 8.48]{Kechris}. An interesting generalization, proved by Reclaw in \cite{Reclaw}, is that if $R$ is any Borel relation on $2^\w$, then no subset of $2^\w$ is ordered by $R$ in type $\omega_1$.)
\end{proof}

\begin{lemma}\label{lem:scale}
Suppose $D$ is a discrete space and $|D|$ has countable cofinality. Then the metrizable space $D^\w$ has a subspace $Y$ with $|Y| = |D|^+$ such that $Y$ contains no copies of $2^\w$.
\end{lemma}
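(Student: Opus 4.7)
The plan is to reduce to Lemma~\ref{lem:noCantor} by producing a subspace $Y \subseteq D^\w$ of size $|D|^+$ that is well-ordered by $<^*$. Once such a $Y$ is in hand, no subset of $Y$ can be homeomorphic to $2^\w$: any such subset would itself be $<^*$-well-ordered, contradicting Lemma~\ref{lem:noCantor}. Let $\k := |D|$, and identify $D$ with $\k$ via a bijection, so $D^\w = \k^\w$ is the countable power of a totally ordered discrete set. The problem thereby reduces to constructing a $<^*$-increasing sequence of length $\k^+$ inside $\k^\w$.

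To build such a sequence I would first fix an increasing sequence $\lambda_0 < \lambda_1 < \lambda_2 < \cdots$ of regular uncountable cardinals with $\sup_n \lambda_n = \k$, which exists because $\cf(\k) = \w$ (take any cofinal $\w$-sequence in $\k$ and pass to its successor cardinals). The target will be the product $\prod_m \lambda_{m+1}$, sitting inside $\k^\w$. I would then recursively build $f_\a \in \prod_m \lambda_{m+1}$ for $\a < \k^+$ as follows: at stage $\a$, since $|\a| \leq \k = \sup_n \lambda_n$, fix a partition $\a = \bigsqcup_{n \in \w} A_n^\a$ with $|A_n^\a| \leq \lambda_n$ for every $n$ (if $|\a| < \lambda_{n_0}$ for some $n_0$, place everything in $A_{n_0}^\a$; if $|\a| = \k$, enumerate $\a$ and cut the index set $\k$ into blocks $[\lambda_{n-1},\lambda_n)$). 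Then set
$$f_\a(m) \;=\; \sup\!\left\{ f_\b(m) : \b \in \bigcup_{n \leq m} A_n^\a \right\} + 1.$$

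Two verifications close the recursion. First, $f_\a(m) < \lambda_{m+1}$: the indexing set has cardinality at most $\sum_{n \leq m} \lambda_n = \lambda_m$, and $\cf(\lambda_{m+1}) = \lambda_{m+1} > \lambda_m$, so the supremum stays strictly below $\lambda_{m+1}$. Second, $f_\b <^* f_\a$ for every $\b < \a$: let $n_0$ be the unique index with $\b \in A_{n_0}^\a$; then for every $m \geq n_0$, $\b \in \bigcup_{n \leq m} A_n^\a$ and hence $f_\b(m) < f_\a(m)$. Taking $Y = \{f_\a : \a < \k^+\}$ yields a $<^*$-well-ordered subset of $\prod_m \lambda_{m+1} \subseteq D^\w$ of cardinality $\k^+$ (the $f_\a$ are automatically distinct), and Lemma~\ref{lem:noCantor} finishes the proof.

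The only substantive obstacle is the cardinal-arithmetic balancing act: the partition sizes $\lambda_n$ must be small enough relative to $\cf(\lambda_{m+1})$ that the cumulative support $\bigcup_{n \leq m} A_n^\a$ at stage $m$ has cardinality strictly less than the cofinality of the $m$-th factor. Working inside $\prod_m \lambda_{m+1}$ rather than in $\k^\w$, and exploiting the regularity of each $\lambda_{m+1}$, resolves this cleanly; everything else is a routine diagonal argument.
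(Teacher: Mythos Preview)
Your approach is essentially the paper's: both reduce to Lemma~\ref{lem:noCantor} by constructing a $<^*$-increasing $\k^+$-sequence inside $\prod_n \mu_n$ for a cofinal sequence of regular cardinals $\mu_n \nearrow \k$, dominating the $\leq\!\k$ earlier functions at stage $\a$ by partitioning them into blocks of size $\leq \mu_n$ and diagonalizing. The details match almost verbatim.

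There is one small gap. Your sentence ``fix an increasing sequence $\lambda_0 < \lambda_1 < \cdots$ of regular \emph{uncountable} cardinals with $\sup_n \lambda_n = \k$'' presupposes $\k$ is uncountable, but the lemma also covers $\k = \w$ (since $\cf(\w) = \w$), and no such sequence exists below $\w$. The paper handles this case separately in one line: for $\k = \w$, just take any $\w_1$-scale in $\w^\w$ (which exists by the usual diagonalization, since countably many functions in $\w^\w$ always have a common $<^*$-upper bound). Add that sentence and your argument is complete.
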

\begin{proof}
For this proof, all cardinals are considered to carry the discrete topology (as well as their usual order).

Let $\k$ be a cardinal with $\cf(\k) = \w$. We claim there is some $Y \sub \k^\w$ such that $|Y| = \k^+$, and $Y$ is well ordered by the relation $<^*$ defined above. This suffices to prove the lemma, because Lemma~\ref{lem:noCantor} implies that any such $Y$ contains no copies of the Cantor set.

This ``claim'' is a fairly well known fact, but for the sake of completeness, we outline a (short) proof here. 

If $\k = \w$, we may take $Y$ be to any $\w_1$-sequence of functions in $\w^\w$ increasing with respect to $<^*$. %Such a sequence is easily constructed by recursion.

Now suppose $\k$ is uncountable.
Fix an increasing sequence $\seq{\mu_n}{n \in \w}$ of regular infinite cardinals with limit $\k$.
%For the remainder of the proof, all cardinals are considered to carry the discrete topology. 

%The space $\prod_{n \in \w} \mu_n$ is a subspace of $\k^\w$, and so to prove the lemma it suffices to show that $\prod_{n \in \w} \mu_n$ has a subspace $Y$ with $|Y| = \k^+$ containing no copies of $2^\w$.
%(In fact, $\k^\w \homeo \prod_{n \in \w} \mu_n$, so this is really just a restatement of the lemma. But we do not need this fact in what follows.)

To prove the claim in this case, we show there is a $\k^+$-sized subset $\set{f_\a}{\a < \k^+}$ of $\prod_{n \in \w} \mu_n \sub \k^\w$ that is well ordered by $<^*$.

It is a routine matter to choose the $f_\a$ by recursion, provided that we know: if $\F \sub \prod_{n \in \w} \mu_n$ and $|\F| \leq \k$, then there is some $g \in \prod_{n \in \w} \mu_n$ such that $f <^* g$ for all $f \in \F$. 
So suppose $\F$ is some such family of functions. 
Write $\F$ as an increasing union $\bigcup_{n \in \w}\F_n$, where $|\F_n| \leq \mu_n$ for all $n$. 
Then define $g(0) = 0$ and $g(n) = \sup \set{f(n)+1}{f \in \F_{n-1}}$ for all $n > 0$. Note that $\sup \set{f(n)+1}{f \in \F_{n-1}} \in \mu_n$ because $\mu_n$ is regular and $|\F_{n-1}| \leq \mu_{n-1} < \mu_n$; thus $g$ is well defined. This $g$ clearly works.
\end{proof}

\begin{theorem}\label{thm:counterexample}
There is a metrizable space $X$ with $|X| = \continuum^{+(\w+1)}$ such that $X$ is $2^\w$-coverable but not $2^\w$-partitionable.
\end{theorem}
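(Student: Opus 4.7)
The plan is to build $X$ as a subspace of $D^\w$ with $D$ a discrete space of cardinality $\continuum^{+\w}$. By Lemma~\ref{lem:scale}, fix $Y \sub D^\w$ with $|Y| = \continuum^{+(\w+1)}$; from the construction in that proof, $Y$ is well-ordered by the Borel relation $<^*$, so by Recław's theorem (cited in Lemma~\ref{lem:noCantor}) every copy of $2^\w$ in $D^\w$ meets $Y$ in at most countably many points. For each $\sigma \in D^{<\w}$, let $[\sigma] = \set{z \in D^\w}{z \rest |\sigma| = \sigma}$ and choose $K_\sigma \sub [\sigma]$ with $K_\sigma \homeo 2^\w$ and $K_\sigma \cap Y = \0$; this is possible because $\sigma \cat \{0,1\}^\w$ is a Cantor set in $[\sigma]$ whose intersection with $Y$ is countable, so its complement in that Cantor set is an uncountable Polish space and contains a perfect subset by Kuratowski's theorem. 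Let $X = Y \cup \bigcup_{\sigma \in D^{<\w}} K_\sigma$, a metrizable subspace of $D^\w$.

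The cardinal bounds $|D^{<\w}| = \continuum^{+\w}$ and $|K_\sigma| = \continuum$ give $|X \setminus Y| \leq \continuum^{+\w}$, whence $|X| = \continuum^{+(\w+1)}$. For $2^\w$-coverability, each point of some $K_\sigma$ is already in a copy of $2^\w$, and for $y \in Y$ the set $C_y = \{y\} \cup \bigcup_{n \in \w} K_{y \rest n}$ is a copy of $2^\w$ containing $y$: it is zero-dimensional and without isolated points (each $K_{y \rest n}$ is perfect, and the $K_{y \rest n}$ shrink to $y$ in the product topology), and it is compact because it is closed in $D^\w$ with finite coordinate projections (for $n > m$, $K_{y \rest n} \sub [y \rest n]$ projects to $\{y(m)\}$ on coordinate $m$). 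Brouwer's theorem then gives $C_y \homeo 2^\w$.

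Finally, suppose toward contradiction that $\P$ is a partition of $X$ into copies of $2^\w$. By Recław's theorem, $|K \cap Y| \leq \aleph_0$ for each $K \in \P$, and since $\P$ covers $Y$ this forces $|\P| \geq \continuum^{+(\w+1)}$. But also $|K \setminus Y| = \continuum$ for each $K \in \P$, and the sets $K \setminus Y$ are pairwise disjoint subsets of $X \setminus Y$, so $|\P| \cdot \continuum \leq |X \setminus Y| \leq \continuum^{+\w}$, giving $|\P| \leq \continuum^{+\w}$, a contradiction. The main obstacle is keeping $|X \setminus Y|$ at $\continuum^{+\w}$ while still arranging Cantor-set coverage for every $y \in Y$; the dyadic-tree skeleton $\{K_\sigma : \sigma \in D^{<\w}\}$ does both simultaneously, and is exactly what lets the cardinal bound close against Recław's theorem.
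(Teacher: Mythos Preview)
Your proof is correct and follows essentially the same approach as the paper: build $X$ inside $D^\w$ (with $|D| = \continuum^{+\w}$) as the union of the $<^*$-well-ordered set $Y$ from Lemma~\ref{lem:scale} together with one Cantor set per basic cylinder, then observe that any Cantor set in $X$ must meet the size-$\continuum^{+\w}$ part $X \setminus Y$, which caps the size of any packing. The paper's version is slightly leaner---it does not require $K_\sigma \cap Y = \emptyset$, invokes Lemma~\ref{lem:setup} for coverability rather than constructing $C_y$ explicitly, and needs only $K \not\subseteq Y$ (from Lemma~\ref{lem:noCantor}) rather than the stronger Rec\l{}aw bound $|K \cap Y| \leq \aleph_0$---but the underlying argument is the same.
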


\begin{proof}
Let $D$ be the discrete space of cardinality $\continuum^{+\w}$. Using the lemma above, fix a subspace $Y$ of $D^\w$ with cardinality $\continuum^{+(\w+1)}$ such that $Y$ contains no copies of $2^\w$. 

Notice that the standard basis $\B$ for $D^\w$ has cardinality $|\B| = |D| = \continuum^{+\w}$. For each basic open $U \sub D^\w$, fix a copy $C_U$ of $2^\w$ such that $C_U \sub U$. Let $$\textstyle X = Y \cup \bigcup_{U \in \B}C_U.$$
Clearly $X$ is metrizable, because $X \sub D^\w$, and $|X| = |Y|+|\B| \cdot \continuum = \continuum^{+(\w+1)}$.

From our construction of $X$, it is obvious that every nonempty open subset of $X$ contains a copy of $2^\w$. By Lemma~\ref{lem:setup}, $X$ is $2^\w$-coverable.

Yet we claim that there is no partition of $X$ into copies of $2^\w$. Indeed, suppose $\mathcal Q$ is any packing of Cantor spaces into $X$. Every $K \in \mathcal Q$ must contain a point of $\bigcup_{U \in \B}C_U$, because otherwise we would have $K \sub Y$, contradicting to our choice of $Y$. But $\card{\bigcup_{U \in \B}C_U} = |\B| \cdot |2^\w| = \continuum^{+\w} \cdot \continuum = \continuum^{+\w}$, and therefore $|\mathcal Q| \leq \continuum^{+\w}$. But then $\card{\bigcup \mathcal Q} = |\mathcal Q| \cdot \continuum \leq \continuum^{+\w} < |X|$. Therefore $\mathcal Q$ is not a partition of $X$.
\end{proof}

%%%%%%%%%%%%
\section{Completely metrizable spaces}

In this section we prove a completely metrizable space is $2^\w$-partitionable if and only if it is $2^\w$-coverable. 
In fact, we prove something a little more general: a metrizable, absolute Borel space is $2^\w$-partitionable if and only if it is $2^\w$-coverable. For completely metrizable spaces, this is also equivalent to having no isolated points.

Recall that a space $X$ is called an \emph{absolute Borel} space if it is homeomorphic to a Borel subspace of a completely metrizable space. Such spaces were studied extensively by Stone in \cite{STONE}.

Recall that the \emph{weight} of a topological space $X$, denoted $\wt(X)$, is the smallest possible cardinality of a basis for $X$, i.e.,
$$\wt(X) = \min\set{|\B|}{\B \text{ is a basis for }X}.$$
Note that $\wt(X) \leq c(X)$ for any $X$. For metrizable spaces, we can say more:

\begin{lemma}\label{lem:wt}
If $X$ is metrizable, then $\wt(X) = c(X)$, and there is a cellular family in $X$ with size $\wt(X)$.
\end{lemma}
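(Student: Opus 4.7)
The plan is to establish $\wt(X) \le c(X)$ (the reverse inequality $c(X) \le \wt(X)$ is immediate, since picking a nonempty basic open subset of each cell embeds a cellular family into any base), and then to deduce the existence of a cellular family of size $\wt(X) = c(X)$ directly from the Erd\H{o}s--Tarski theorem for metrizable spaces cited just before the statement.

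To prove $\wt(X) \le c(X)$, I would use the classical strengthening of Stone's theorem (the construction underlying Bing's metrization theorem): every open cover of a metric space admits a $\s$-discrete open refinement. Specifically, for each $n \in \w$ I would fix a $\s$-discrete open refinement $\bigcup_{m \in \w} \V_{n,m}$ of the cover of $X$ by balls of radius $1/n$, with each $\V_{n,m}$ a discrete family.

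Then $\B = \bigcup_{n,m} \V_{n,m}$ is a base for $X$: given $x \in X$ and $\e > 0$, choose $n$ with $2/n < \e$; the point $x$ lies in some $V \in \V_{n,m}$, and since $V$ is contained in some $1/n$-ball meeting $x$, we have $V \sub B(x,\e)$. Each $\V_{n,m}$ is discrete, hence consists of pairwise disjoint nonempty open sets, so $|\V_{n,m}| \le c(X)$. Assuming $c(X)$ is infinite (the finite case is immediate, since a finite Hausdorff space is discrete and its singletons form a cellular family of size $|X| = \wt(X)$), we get
\[
\wt(X) \,\le\, |\B| \,\le\, \aleph_0 \cdot c(X) \,=\, c(X),
\]
so $\wt(X) = c(X)$. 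The cellular family of size $\wt(X)$ is then furnished by the cited Erd\H{o}s--Tarski result for metrizable spaces, which guarantees that the supremum in the definition of $c(X)$ is attained.

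I do not foresee any real difficulty. The only step requiring external machinery is the existence of the $\s$-discrete open refinements, and beyond that the argument is entirely elementary.
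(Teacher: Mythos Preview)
Your proposal is correct. The paper's own proof is simply a pair of citations: Engelking's Theorem 4.1.15 for $\wt(X)=c(X)$ and the Erd\H{o}s--Tarski result for attainment of the cellularity supremum. You are supplying the standard argument behind the first citation (via a $\sigma$-discrete base, which is exactly how Engelking proves it) and invoking the same Erd\H{o}s--Tarski theorem for the second assertion, so there is no substantive difference in approach---you have just unpacked the reference. One incidental remark: the sentence immediately preceding the lemma in the paper, ``Note that $\wt(X)\le c(X)$ for any $X$,'' has the inequality backwards; you correctly treat $c(X)\le\wt(X)$ as the trivial direction.
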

\begin{proof}
The first assertion is proved in \cite[Theorem 4.1.15]{Engelking}.
The second assertion is proved by Erd\H{o}s and Tarski in \cite{Erdos&Tarski} (as mentioned in the previous section), and can also be found in \cite[Exercise 4.1.H]{Engelking}.
\end{proof}

%\begin{lemma}\label{lem:sigmadiscrete}
%Suppose $X$ is a metrizable absolute Borel space. If every subset of $X$ is Borel, then $X$ is $\s$-discrete.
%\end{lemma}
%\begin{proof}
%
%\end{proof}
%Turn this into an open question.

\begin{lemma}\label{lem:bigly}
Let $X$ be an absolute Borel space. If $X$ is $2^\w$-coverable, then there is a packing of Cantor spaces into $X$ with size $|X|$.
\end{lemma}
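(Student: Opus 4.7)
Since $X$ is $2^\w$-coverable and nonempty, $|X|\geq\continuum$. If $|X|=\continuum$, a Cantor subset of $X$ partitioned via Lemma~\ref{lem:break-up} is the desired packing. So assume $|X|>\continuum$. The plan is to construct a packing $\mathcal Q$ of Cantor spaces in $X$ and show $|\mathcal Q|=|X|$. Writing $Y=X\setminus\bigcup\mathcal Q$, it suffices to show $|Y|\leq|\mathcal Q|\cdot\continuum$. The construction will ensure that (i) $Y$ contains no copy of $2^\w$, and (ii) $\bigcup\mathcal Q$ is Borel in the ambient completely metrizable space, so that $Y$ is itself absolute Borel.

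The crux is then the following \emph{non-separable perfect set theorem}: an absolute Borel space $Y$ with no copy of $2^\w$ has $|Y|\leq\wt(Y)$. I would prove this in two steps. First, such a $Y$ must be scattered: otherwise, an inductive construction produces a countable dense-in-itself $C\subset Y$, whose closure $\overline{C}$ in $Y$ is a separable closed subspace without isolated points, and the classical perfect set theorem for Borel subsets of Polish spaces places a copy of $2^\w$ inside $\overline{C}$, contradicting the assumption on $Y$. Second, for scattered metrizable $Y$ the Cantor--Bendixson decomposition gives $|Y|=\sum_\alpha|Y_\alpha\setminus Y_{\alpha+1}|$, with each CB-difference discrete in $Y_\alpha$ and hence of cardinality at most $\wt(Y)$; the absolute Borel assumption further bounds the CB-rank by $\wt(Y)$, yielding $|Y|\leq\wt(Y)$.

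To construct $\mathcal Q$ satisfying (i) and (ii), start from a $\sigma$-discrete base $\bigcup_n\B_n$ of $X$ (Bing) and build $\mathcal Q=\bigcup_n\mathcal Q_n$ layer by layer, each $\mathcal Q_n$ a discrete family of Cantor subsets of $X$ indexed by base elements of $\B_n$, in the spirit of the proof of Theorem~\ref{thm:TFAE}, $(3)\Rightarrow(1)$. Then $\bigcup\mathcal Q$ is $F_\sigma$, so $Y$ is $G_\delta$ and absolute Borel. A careful (possibly iterated) choice of the $\mathcal Q_n$ ensures that $Y$ contains no copy of $2^\w$. The Erd\H{o}s--Tarski cellular family of size $c(X)=\wt(X)$ supplied by Lemma~\ref{lem:wt} embeds into $\mathcal Q$, giving $|\mathcal Q|\geq c(X)\geq\wt(Y)\geq|Y|$, and therefore $|X|=|Y|+|\mathcal Q|\cdot\continuum=|\mathcal Q|\cdot\continuum=|\mathcal Q|$.

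The main obstacles are (a) the Cantor--Bendixson rank bound in the second step of the non-separable perfect set theorem, which requires the absolute Borel hypothesis in an essential way (likely via the structural results of Stone~\cite{STONE} on non-separable Borel sets), and (b) arranging the iterated construction of $\mathcal Q$ to terminate with a Cantor-free Borel residual while remaining compatible with the paracompactness-driven layer-by-layer framework.
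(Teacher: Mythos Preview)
Your Step~1 in the ``non-separable perfect set theorem'' is incorrect: an absolute Borel space with no Cantor subspace need not be scattered. The rationals $\Q$ are $F_\sigma$ in $\R$, contain no copy of $2^\w$, and are dense-in-themselves. Your argument breaks down because the closure $\overline{C}$ of a countable dense-in-itself set may itself be countable, and then the classical perfect set theorem yields nothing. The conclusion you want (absolute Borel and Cantor-free implies $|Y| \leq \wt(Y)$) is in fact true, but its proof essentially \emph{is} Stone's structure theorem for non-separable Borel sets---so you end up needing the same external input the paper uses, only buried inside a subsidiary lemma rather than invoked up front.

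Obstacle~(b) is also a genuine gap that you have not resolved. It is unclear how to arrange a packing $\mathcal Q$ with $\bigcup \mathcal Q$ an $F_\sigma$ set while simultaneously guaranteeing that $X \setminus \bigcup \mathcal Q$ is Cantor-free. A maximal packing via Zorn's Lemma gives the latter but not the former; a $\sigma$-discrete layer-by-layer construction gives the former but not obviously the latter, and the phrase ``possibly iterated'' does not supply a termination argument.

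The paper sidesteps both difficulties by applying Stone's theorem directly at the outset: there is a continuous bijection $h\colon Y \to X$ with $Y$ a disjoint sum of at most $\wt(X)$ generalized Baire spaces $D^\w$. If $|X| = \wt(X)$, a cellular family of that size (Lemma~\ref{lem:wt}) together with Lemma~\ref{lem:setup} gives the packing immediately. If $|X| > \wt(X)$, the non-degenerate summands carry all the mass, each $D^\w$ with $|D| \geq 2$ partitions trivially into $|D^\w|$ Cantor sets (via $D^\w \homeo 2^\w \times D^\w$ when $D$ is infinite), and $h$ restricts to a homeomorphism on each compact piece. This is shorter and avoids any delicate construction inside $X$.
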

\begin{proof}
Let $\k = \wt(X)$. 
By a theorem of Stone (Theorem 10 in subsection 5.4 of \cite{STONE}), there is a continuous bijection $h: Y \to X$, where $Y$ is the disjoint sum of $\leq\!\k$ generalized Baire spaces. (A \emph{generalized Baire space} means a space of the form $D^\w$, where $D$ is discrete. We do not exclude the possibility that $|D| = 1$, in which case $D^\w$ consists of a single point.)
Let $A$ denote the set of all isolated points in $Y$ (i.e., all the degenerate generalized Baire spaces in our disjoint sum), and let $B = Y \setminus A$. 

\vspace{2mm}

\noindent \emph{Case 1:} Suppose $|X| = \k = \wt(X)$.
By Lemma~\ref{lem:wt}, there is a cellular family $\mathcal S$ in $X$ with $\card{\mathcal S} = \k$. Because we are assuming $X$ is $2^\w$-coverable, Lemma~\ref{lem:setup} implies every $U \in \mathcal S$ contains some $K_U$ homeomorphic to $2^\w$. Then $\set{K_U}{U \in \mathcal S}$ is the required packing.

\vspace{2mm}

\noindent \emph{Case 2:} Suppose $|X| > \k$. Note that $|X| = |A|+|B|$, as $X = h[A] \cup h[B]$ and $h$ is a bijection. But $|A| \leq \k$, because $Y$ is the union of $\leq\!\k$ generalized Baire spaces, so in this case we have $|X| = |B|$.

We claim that $B$ can be partitioned into $|B|$ copies of $2^\w$. Because $B$ is a disjoint sum of non-degenerate generalized Baire spaces, i.e. spaces of the form $D^\w$ with $D$ discrete and $|D| \geq 2$, it suffices to show that the space $D^\w$ can be partitioned into $|D^\w|$ copies of $2^\w$ whenever $|D| \geq 2$. But this is easy: if $D$ is finite then $D \homeo 2^\w$ and the result follows from Lemma~\ref{lem:break-up}, and if $D$ is infinite then $D^\w \homeo (\{0,1\} \times D)^\w \homeo 2^\w \times D^\w$.

Let $\P$ be a partition of $B$ into $|B|$ copies of $2^\w$. Because $h$ is a continuous bijection onto a Hausdorff space, $h$ restricts to a homeomorphism on any compact subset of $Y$. In particular, $h[C] \homeo 2^\w$ for every $C \in \P$. Hence $\set{h[C]}{C \in \P}$ is a packing of $|B| = |X|$ Cantor spaces into $X$. 
\end{proof}

\begin{theorem}\label{thm:absBorel}
An absolute Borel space is $2^\w$-partitionable if and only if it is $2^\w$-coverable.
\end{theorem}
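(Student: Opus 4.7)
The forward direction is immediate since every partition is a cover, so the plan is to prove the converse: if $X$ is an absolute Borel space that is $2^\w$-coverable, then $X$ is $2^\w$-partitionable. To do this, I would appeal to the equivalence $(1) \Leftrightarrow (5)$ in Theorem~\ref{thm:TFAE}, which requires showing that every nonempty open subset $U$ of $X$ admits a packing of Cantor spaces of size $|U|$.

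The key observation is that both hypotheses pass to open subspaces. First, if $X$ is homeomorphic to a Borel subset of a completely metrizable space $M$ and $U \sub X$ is open, then $U = X \cap W$ for some $W$ open in $M$; since $W$ is Borel in $M$ and $X$ is Borel in $M$, $U$ is Borel in $M$ as well, so $U$ is itself an absolute Borel space. Second, by Lemma~\ref{lem:setup}, $X$ being $2^\w$-coverable means every nonempty open subset of $X$ contains a copy of $2^\w$; so every nonempty open $V \sub U$ contains a copy of $2^\w$, and therefore $U$ is $2^\w$-coverable by the same lemma (applied now to $U$, which is first countable since it is metrizable).

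With these two observations, Lemma~\ref{lem:bigly} applied to $U$ produces a packing of Cantor spaces into $U$ of cardinality $|U|$. Since this holds for every nonempty open $U \sub X$ (and trivially for $U = \0$), condition $(5)$ of Theorem~\ref{thm:TFAE} is satisfied, and so $X$ is $2^\w$-partitionable.

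There is essentially no obstacle here beyond verifying that open subspaces of absolute Borel spaces remain absolute Borel; all the substantive work has been done in Lemma~\ref{lem:bigly} (which leverages Stone's representation theorem to handle cardinality) and in Theorem~\ref{thm:TFAE} (which locally promotes a packing to a partition using paracompactness of metrizable spaces). The "in particular" clause of the theorem statement, for completely metrizable spaces, then follows by combining the equivalence with the classical fact that a completely metrizable space with no isolated points contains a copy of $2^\w$ in every nonempty open set (via the standard Cantor scheme construction in a complete metric), which by Lemma~\ref{lem:setup} is equivalent to $2^\w$-coverability.
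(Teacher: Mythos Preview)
Your proposal is correct and follows essentially the same route as the paper: observe that both ``absolute Borel'' and ``$2^\w$-coverable'' pass to open subspaces, apply Lemma~\ref{lem:bigly} to each nonempty open $U$ to obtain a packing of size $|U|$, and then invoke the implication $(5)\Rightarrow(1)$ of Theorem~\ref{thm:TFAE}. One small note: the ``in particular'' clause you discuss is not part of the statement of Theorem~\ref{thm:absBorel} itself but is handled separately as Theorem~\ref{thm:Polish}.
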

\begin{proof}
The ``only if'' part of the theorem holds in general, for any space. For the ``if'' part,
let $X$ be a nonempty absolute Borel space (the theorem is trivially true for $X = \0$).
If $U$ is a nonempty open subset of $X$, then $U$ is also an absolute Borel space. Furthermore, $U$ is $2^\w$-coverable: this follows from Lemma~\ref{lem:setup}, since it is clear that statement $(3)$ in Lemma~\ref{lem:setup} is preserved by passing to open subsets. 
By Lemma~\ref{lem:bigly}, there is a packing of $|U|$ Cantor spaces into $U$.
As $U$ was arbitrary, Theorem~\ref{thm:TFAE} now implies that $X$ is $2^\w$-partitionable.
\end{proof}

%Note that this theorem justifies our use of a ``wild'' subset of $D^\w$ in the proof of Theorem~\ref{thm:counterexample}.
%
\begin{theorem}\label{thm:Polish}
A completely metrizable space
is $2^\w$-partitionable if and only if 
it is $2^\w$-coverable if and only if 
it has no isolated points.
\end{theorem}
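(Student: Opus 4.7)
The plan is to reduce the theorem to two pieces: the equivalence of $2^\w$-partitionability and $2^\w$-coverability (which will follow immediately from earlier work), and the equivalence of $2^\w$-coverability with the absence of isolated points (which is the real content of this particular theorem).

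First, every completely metrizable space is an absolute Borel space (in fact, an absolute $G_\delta$), so Theorem~\ref{thm:absBorel} gives at once that a completely metrizable $X$ is $2^\w$-partitionable if and only if it is $2^\w$-coverable. It then remains to show that $X$ is $2^\w$-coverable if and only if $X$ has no isolated points. The forward direction is trivial: if $x \in X$ is isolated in $X$, then $\{x\}$ is open in any subspace of $X$ containing $x$, so a copy $K$ of $2^\w$ with $x \in K$ would have $x$ as an isolated point, contradicting Brouwer's characterization.

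For the reverse direction, suppose $X$ has no isolated points. By Lemma~\ref{lem:setup}, it suffices to verify that every nonempty open subset $U \subseteq X$ contains a copy of $2^\w$. Any such $U$ is itself completely metrizable (as an open, hence $G_\delta$, subspace of a completely metrizable space) and inherits the ``no isolated points'' property from $X$. So the job reduces to the following classical fact: every nonempty completely metrizable space with no isolated points contains a copy of $2^\w$.

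This classical fact is proved by a standard Cantor-scheme construction, which I would outline briefly rather than write out in detail. Fix a complete metric on $U$. Recursively choose a family of nonempty closed balls $\set{B_s}{s \in 2^{<\w}}$ such that for each $s \in 2^{<\w}$ the balls $B_{s\cat 0}$ and $B_{s\cat 1}$ are disjoint and contained in $B_s$, and $\mathrm{diam}(B_s) \to 0$ as $|s| \to \infty$; this is possible at each stage because no point is isolated, so every nonempty open set contains two disjoint nonempty closed balls of arbitrarily small diameter. For each $x \in 2^\w$, completeness yields a unique point $f(x) \in \bigcap_{n \in \w} B_{x\rest n}$, and $f: 2^\w \to U$ is a continuous injection from a compact space into a Hausdorff space, hence a homeomorphism onto its image. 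The image is then the desired copy of $2^\w$ in $U$. The only mild subtlety is ensuring the $B_s$ may be chosen closed with nonempty interior while maintaining the disjointness and diameter conditions, but since $U$ is metric and has no isolated points this is routine. With this classical fact in hand, the theorem is complete.
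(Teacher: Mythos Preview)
Your proof is correct and follows essentially the same route as the paper: reduce the first equivalence to Theorem~\ref{thm:absBorel} via the observation that completely metrizable spaces are absolute Borel, note the trivial direction that $2^\w$-coverable implies no isolated points, and for the converse use Lemma~\ref{lem:setup} to reduce to showing that every nonempty open $U$ (itself completely metrizable without isolated points) contains a copy of $2^\w$. The only difference is cosmetic: where you sketch the Cantor-scheme argument, the paper simply cites \cite[Lemma 5.3]{STONE} for this classical fact.
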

\begin{proof}
The first ``if and only if'' holds by Theorem~\ref{thm:absBorel}, as every completely metrizable space is absolute Borel, and it is clear that any $2^\w$-coverable space has no isolated points.
So we must show that if a completely metrizable space has no isolated points, then it is $2^\w$-coverable. 
Suppose $X$ is such a space, and let $U$ be a nonempty open subset of $X$. 
Then $U$ is itself completely metrizable, and has no isolated points.
It follows (e.g., via \cite[Lemma 5.3]{STONE}) that $U$ contains a copy of the Cantor space.
As $U$ was arbitrary, Lemma~\ref{lem:setup} now implies $X$ is $2^\w$-coverable.
\end{proof}

%%%%%%%%%%%%
\section{A few further things}

%In this section we address a few natural questions, and a few corollaries to the preceding results, regarding $Y$-partitionability for $Y \neq 2^\w$. 

\begin{proposition}\label{prop:obvious}
Let $Y$ and $Z$ be topological spaces. If $Y$ is $Z$-coverable, then any $Y$-coverable space is also $Z$-coverable. If $Y$ is $Z$-coverable and $Z$ is $Y$-coverable, then ``$Y$-coverable'' and ``$Z$-coverable'' are equivalent properties.
Similarly, if $Y$ is $Z$-partitionable, then any $Y$-partitionable space is also $Z$-partitionable. If $Y$ is $Z$-partitionable and $Z$ is $Y$-partitionable, then ``$Y$-partitionable'' and ``$Z$-partitionable'' are equivalent properties.
\end{proposition}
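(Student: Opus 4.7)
The proof is essentially a transitivity argument, and the only content is to chase the definitions carefully. I will prove the first (covering) statement directly, note that the argument adapts verbatim to partitions, and observe that the equivalence assertions are immediate corollaries.

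The plan for the covering direction is as follows. Suppose $Y$ is $Z$-coverable, so fix a covering $\mathcal{C}_Y$ of $Y$ by subspaces homeomorphic to $Z$. Let $X$ be $Y$-coverable, and fix a covering $\mathcal{D}$ of $X$ by subspaces homeomorphic to $Y$. For each $D \in \mathcal{D}$, fix a homeomorphism $h_D \colon Y \to D$; then $\set{h_D[C]}{C \in \mathcal{C}_Y}$ is a family of subspaces of $D$, each homeomorphic to $Z$, whose union is all of $D$. Taking the union over $D \in \mathcal{D}$, the collection
$$\textstyle \mathcal{E} \,=\, \bigcup_{D \in \mathcal{D}} \set{h_D[C]}{C \in \mathcal{C}_Y}$$
covers $X$; and every member of $\mathcal{E}$, being a subspace of some $D \sub X$ homeomorphic to $Z$, is also a subspace of $X$ homeomorphic to $Z$. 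Hence $X$ is $Z$-coverable.

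For the partition direction, repeat the argument word-for-word, taking $\mathcal{C}_Y$ to be a partition of $Y$ into copies of $Z$ and $\mathcal{D}$ to be a partition of $X$ into copies of $Y$. Because the $D \in \mathcal{D}$ are pairwise disjoint and each $\set{h_D[C]}{C \in \mathcal{C}_Y}$ is a partition of $D$, the resulting family $\mathcal{E}$ is automatically a partition of $X$ into copies of $Z$.

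Finally, the equivalence assertions follow immediately: if $Y$ is $Z$-coverable and $Z$ is $Y$-coverable, then the first assertion (applied in both directions) tells us that every $Y$-coverable space is $Z$-coverable and vice versa, so the two properties coincide. The same remark, applied to partitions, gives the last sentence.

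There is no real obstacle here: every ingredient is purely formal, and the only thing to verify is that a homeomorphic image of a subspace of $X$ homeomorphic to $Z$ is itself a subspace of $X$ homeomorphic to $Z$, which is immediate.
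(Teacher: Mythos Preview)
Your proof is correct and follows essentially the same approach as the paper: cover (or partition) each copy of $Y$ in $X$ by copies of $Z$ and take the union, then derive the equivalence assertions by applying this transitivity in both directions. Your write-up is slightly more explicit about the homeomorphisms, but the underlying argument is identical.
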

\begin{proof}
Suppose $\C$ is a covering of some space $X$ into copies of $Y$. If $Y$ is $Z$-coverable, then every member of $\P$ may be covered by copies of $Z$, and the union of all these coverings is a covering of $X$ with copies of $Z$. This proves the first assertion about coverings, and the second assertion follows immediately from the first.
The two assertions about partitions are proved in exactly the same way.
\end{proof}

\begin{theorem}
Let $Y$ be a zero-dimensional Polish space without isolated points. Then $2^\w$ is $Y$-partitionable and $Y$ is $2^\w$-partitionable. Consequently:
\begin{enumerate}
\item If $X$ is first countable and $|X| \leq \continuum$, then $X$ is $Y$-partitionable if and only if $X$ is $Y$-coverable.
\item If $X$ is metrizable and $|X| \leq \continuum^{+\w}$, then $X$ is $Y$-partitionable if and only if $X$ is $Y$-coverable.
\item If $X$ is completely metrizable, then $X$ is $Y$-partitionable if and only if $X$ is $Y$-coverable if and only if $X$ has no isolated points.
\end{enumerate}
\end{theorem}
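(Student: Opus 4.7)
The theorem reduces to establishing two assertions: (A) $2^\w$ is $Y$-partitionable, and (B) $Y$ is $2^\w$-partitionable. Once (A) and (B) are in hand, the three numbered consequences will follow by combining Proposition~\ref{prop:obvious} with the corresponding $2^\w$-theorems, namely Theorems~\ref{thm:gen}, \ref{thm:c+omega}, and \ref{thm:Polish}.

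Part (B) is immediate: $Y$ is completely metrizable with no isolated points, so Theorem~\ref{thm:Polish} gives that $Y$ is $2^\w$-partitionable.

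For part (A), my plan is first to show that $\w^\w \homeo Y \times \w^\w$. The product $Y \times \w^\w$ is zero-dimensional and Polish (as a product of such), has no isolated points, and is nowhere locally compact---the last point because $\w^\w$ is nowhere locally compact and local compactness at a point of a product requires local compactness at each coordinate---so by the Alexandroff--Urysohn characterization of the Baire space (\cite[Theorem 7.7]{Kechris}) it is homeomorphic to $\w^\w$. Consequently $\w^\w$ is $Y$-partitionable via the slicing $\set{Y \times \{t\}}{t \in \w^\w}$. To pass from $\w^\w$ to $2^\w$, I would adapt the construction of Lemma~\ref{lem:break-up}: choose pairwise disjoint, nowhere-dense copies $\set{K_n}{n < \w}$ of $2^\w$ in $2^\w$, with one meeting each basic open set, so that (by fact~(3) of Lemma~\ref{lem:break-up}) $2^\w \setminus \bigcup_n K_n \homeo \w^\w$ is $Y$-partitionable. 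What remains is to partition the dense $F_\s$-set $\bigcup_n K_n$ into copies of $Y$; since each $K_n \homeo 2^\w$, I would iterate the same extraction inside each $K_n$, and inside the Cantor pieces produced at each subsequent stage, keeping diameters shrinking to $0$. After $\w$ iterations, the union of the extracted $\w^\w$-pieces should assemble into a $Y$-partition of $2^\w$, except possibly for a transfinite residual set indexed by $\w^\w$ (corresponding to infinite branches through the tree of nested Cantor pieces); the main obstacle is identifying this residual topologically and showing it is either empty or itself $Y$-partitionable (which, if arranged to be homeomorphic to $\w^\w$, it is).

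Once (A) and (B) are established, the three consequences are routine. A $Y$-coverable space is automatically $2^\w$-coverable, since each copy of $Y$ is $2^\w$-coverable by Theorem~\ref{thm:Polish} and the union of the $2^\w$-covers yields a $2^\w$-cover. So under the hypotheses of any of (1), (2), or (3), a $Y$-coverable space $X$ is $2^\w$-coverable; the corresponding $2^\w$-theorem then upgrades this to $2^\w$-partitionability, and part (A) combined with Proposition~\ref{prop:obvious} upgrades it further to $Y$-partitionability. The additional equivalence with ``no isolated points'' in case (3) is immediate from Theorem~\ref{thm:Polish}.
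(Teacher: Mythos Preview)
Your overall architecture matches the paper's exactly: reduce to (A) and (B), then derive (1)--(3) from Proposition~\ref{prop:obvious} together with Theorems~\ref{thm:gen}, \ref{thm:c+omega}, and \ref{thm:Polish}. Part (B) and the derivation of the consequences are correct and essentially identical to the paper's treatment.

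The divergence is in Part (A). You correctly observe that $\w^\w \homeo Y \times \w^\w$, so $\w^\w$ is $Y$-partitionable; the paper uses this too. But to lift this to $2^\w$, you launch into an iterated extraction scheme with an explicitly unresolved residual set. The paper avoids this entirely by inserting one extra link in the chain: $2^\w$ is $\w^\w$-partitionable (cited as \cite[Corollary~1.3]{Bankston&McGovern}, or alternatively by observing that the proof of Theorem~\ref{thm:gen} goes through verbatim with $\w^\w$ in place of $2^\w$, and $2^\w$ is easily $\w^\w$-coverable). Then Proposition~\ref{prop:obvious} applied twice gives $2^\w \to \w^\w \to Y$, i.e.\ $2^\w$ is $Y$-partitionable, with no iteration and no residual to analyze.

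Your iteration can in fact be completed: if at each stage you split each surviving Cantor piece into countably many nowhere-dense sub-Cantor-pieces with diameters tending to $0$, then the residual after $\w$ stages is the set of branches through a countably-branching tree of shrinking closed sets, hence homeomorphic to $\w^\w$ and therefore $Y$-partitionable. But this is real work you have not done, and it is strictly more effort than the one-line chain the paper uses. The missing idea is simply that ``$2^\w$ is $\w^\w$-partitionable'' is already available, so there is no need to build a $Y$-partition of $2^\w$ from scratch.
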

\begin{proof}
It follows from the results of the previous sections that $\w^\w$ is $2^\w$-partitionable. Inversely, $2^\w$ is $\w^\w$-partitionable. This is proved as Corollary 1.3 in \cite{Bankston&McGovern}; alternatively, one may check that the proof of Theorem~\ref{thm:gen} goes through verbatim when $2^\w$ is replaced by $\w^\w$, thus showing that every first countable space of size $\leq \! \continuum$ is $\w^\w$-partitionable if and only if it $\w^\w$-coverable (and it is easy to show $2^\w$ is $\w^\w$-coverable). This proves the first assertion of the theorem in the special case $Y = \w^\w$.

More generally, suppose $Y$ is any zero-dimensional Polish space without isolated points. Then $Y$ is $2^\w$-partitionable (by the results of the previous sections), $2^\w$ is $\w^\w$-partitionable (by the previous paragraph), and $\w^\w$ is $Y$ partitionable (because $\w^\w \homeo Y \times \w^\w$ by \cite[Theorem 7.7]{Kechris}). Via a few applications of Proposition~\ref{prop:obvious}, this shows $2^\w$ is $Y$-partitionable and $Y$ is $2^\w$-partitionable.
This also implies $2^\w$ is $Y$-coverable and $Y$ is $2^\w$-coverable.

Statements $(1)$, $(2)$, and $(3)$ follow from Theorems \ref{thm:gen}, \ref{thm:c+omega}, and \ref{thm:Polish}, respectively, together with the previous paragraph and Proposition~\ref{prop:obvious}.
\end{proof}

We note that ``without isolated points'' cannot be removed from the theorem above. For example, let $Y$ denote the convergent sequence $\w+1$. Then any (Hausdorff) space containing a copy of $Y$ is $Y$-coverable. (\emph{Proof:} Let $X$ be a Hausdorff space, and fix $X_0 \sub X$ with $X_0 \homeo \w+1$. Then $\set{X_0 \cup \{x\}}{x \in X \setminus X_0}$ is a covering of $X$ with copies of $\w+1$.)
The space $X = \w+\w$ (i.e., the union of $\w+1$ with countably many isolated points) is $(\w+1)$-coverable but not $(\w+1)$-partitionable, and serves as a counterexample to $(1)$, $(2)$, and $(3)$ above for $Y = \w+1$.

Similarly, if we take $Y$ to be the union of $2^\w$ with a single isolated point, and $X$ to be the union of $2^\w$ with $\continuum^+$ isolated points, $X$ is $Y$-coverable and not $Y$-partitionable. This provides another counterexample to $(2)$ and $(3)$ above, this time for $Y$ containing only a single isolated point.

On the other hand, for any zero-dimensional Polish space $Y$ (even those with isolated points), every $2^\w$-partitionable space is $Y$-partitionable. This follows from Proposition 5.1 and Theorem 5.2 above, because $\w^\w \homeo Y \times \w^\w$ is $Y$-partitionable.

\subsection*{Acknowledgement}

I wish to thank the anonymous referee of this paper, who pointed me to the work of Stone in \cite{STONE}, enabling me to strengthen the results in Section 4 (originally stated just for completely metrizable spaces) while also making the proofs shorter.

%%%%%%%%%%%%

%%%%%%%%%%%%

\end{document}